\theoremstyle{plain}
 \newtheorem{theorem}{Theorem}[section]
\theoremstyle{definition}
 \newtheorem{definition}[theorem]{Definition}
\theoremstyle{plain}
 \newtheorem{proposition}[theorem]{Proposition}
 \newtheorem{lemma}[theorem]{Lemma} %%Delete [theorem] to re-start numbering
\theoremstyle{remark}
 \newtheorem{remark}[theorem]{Remark}
 \newtheorem{example}[theorem]{Example}
\def\Z{\mathbb{Z}}
\def\F{\mathbb{F}}
\DeclareMathOperator{\girth}{girth}
\DeclareMathOperator{\id}{id}
\DeclareMathOperator{\diam}{diam}
\newcommand{\Hawaii}{Hawai\kern.05em`\kern.05em\relax i}
\newcommand{\Manoa}{M\=anoa}
\definecolor{m}{rgb}{1,0.1,1}
\title{Coarse non-amenability and coarse embeddings}
\subjclass[2000]{{Primary 20F69; Secondary 43A07, 05C25, 20F65, 46T99}}
\keywords{Amenability, coarse embeddings, graph
coverings.}
\author{Goulnara Arzhantseva}
\address{University of Vienna,
Faculty of Mathematics, Nordbergstra${\ss}$e 15, 1090 Wien, Austria}
\email{goulnara.arzhantseva@univie.ac.at}
\author{Erik Guentner}
\address{University of \Hawaii~at \Manoa,
Department of Mathematics,
2565 McCarthy Mall, Honolulu, HI 96822, USA}
\email{erik@math.hawaii.edu}
\author{J\'an \v Spakula}
\address{Mathematisches Institut, Universit\"at M\"unster, Einsteinstr.\ 62, 48149 M\"unster, Germany}
\email{jan.spakula@uni-muenster.de}
\thanks{The first author was partially supported by the ERC grant ANALYTIC no. 259527, the Swiss NSF
Sinergia grant CRSI22 130435, and by the CNRS, UMR 6632. The second
author was partially supported by NSF grant DMS-0349367. The third
author was supported by the Deutsche Forschungsgemeinschaft (SFB
878).}
\begin{document}

\begin{abstract}
  We construct  the first example of a
    coarsely non-amenable (= without Guoliang Yu's property A)
  metric space with bounded geometry   which coarsely embeds into a Hilbert space.
\end{abstract}

\maketitle

\section{Introduction}

The purpose of this paper is to prove the following theorem:

\begin{theorem}\label{thm:main}
There exists a uniformly discrete metric space with bounded geometry, which coarsely embeds into a Hilbert space, but does
not have property A.
\end{theorem}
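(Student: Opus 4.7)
The plan is to realise $X$ as a coarse disjoint union $X=\bigsqcup_n G_n$ of finite connected graphs of uniformly bounded vertex degree with $\diam(G_n)\to\infty$; this automatically produces a uniformly discrete metric space with bounded geometry. The two required features — failure of property A and existence of a coarse embedding into Hilbert space — pull in opposite directions, since the most natural property~A obstruction (a sequence of expanders) is simultaneously the most natural obstruction to Hilbert embeddability. The construction must therefore be carefully calibrated to sit strictly between these two regimes.

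For the positive half I would start from a finitely generated residually finite a-T-menable group $\Gamma$, together with a descending sequence of finite-index subgroups $H_n$ with $\bigcap_n H_n=\{1\}$, and let $G_n$ be the Cayley (or Schreier) graph of $\Gamma/H_n$ with respect to a fixed finite generating set. Because $\Gamma$ has the Haagerup property, it admits a proper conditionally negative definite function $\psi$; pulling $\psi$ back along $\Gamma\twoheadrightarrow\Gamma/H_n$ gives a kernel on each $G_n$, and patching these together with a large offset between blocks should yield a conditionally negative definite, proper kernel on $X$ that controls and is controlled by the metric in the coarse sense, producing the coarse embedding by a standard GNS construction.

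For the negative half I would engineer the subgroups $H_n$ so that the box space $\bigsqcup_n\Gamma/H_n$ carries a sequence of \emph{relative} or \emph{partial} expanders: on some designated subspaces $Y_n\subset G_n$ (for example, images of orbits of a distinguished subgroup $K\leq\Gamma$ that itself has Kazhdan-like behaviour on $K/(K\cap H_n)$) the Markov averaging operators satisfy a uniform spectral gap, even though the ambient $G_n$ look coarsely like covers of a fixed amenable graph. Combining this hidden expansion with the Reiter-type characterisation of property~A — any almost-invariant probability function on $X$ would restrict to an almost-invariant function on the $Y_n$, contradicting the spectral gap — I would conclude that $X$ fails property~A.

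The central obstacle is precisely the tension between these two halves: one must verify that the relative expansion on the $Y_n$ is robust enough to survive all choices of Reiter function on $X$ (not just those supported on $Y_n$), while simultaneously checking that $\psi$ remains proper when pushed down to the $G_n$ — i.e.\ that the subgroups $H_n$ do not collapse the distance on $\Gamma$ faster than $\psi$ grows. Getting these quantitative bounds compatible is where the work lies, and it is what forces the specific choice of $\Gamma$, the generating set, and the filtration $\{H_n\}$; essentially, $\Gamma$ must be a-T-menable globally while harbouring a sub-structure that produces partial Kazhdan-type rigidity in its finite quotients.
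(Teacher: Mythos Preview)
Your proposal has a genuine gap in the ``positive half'' and is badly over-engineered in the ``negative half''.

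For the coarse embedding, the step ``pull $\psi$ back along $\Gamma\twoheadrightarrow\Gamma/H_n$'' does not make sense as written (pullback goes the other way), and there is no general mechanism for descending a proper conditionally negative definite function on $\Gamma$ to a useful kernel on the finite quotients. More importantly, the underlying heuristic --- that a box space of an a-T-menable group should coarsely embed into Hilbert space --- is simply false. The group $SL_2(\Z)$ is a-T-menable (it acts properly on a tree), yet its box space along congruence subgroups is a family of expanders, hence does \emph{not} coarsely embed. So the Haagerup property of $\Gamma$ buys you nothing here; the coarse embedding must come from the specific choice of the filtration $(H_n)$, and you have not said what that choice is or why it works. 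The paper's entire technical content is devoted to exactly this point: it takes $\Gamma=\F_2$, the very particular filtration $\Gamma_n=\Gamma_{n-1}^{(2)}$ by iterated squares, realises each $X_n=\F_2/\Gamma_n$ as the $\Z/2$-homology cover of $X_{n-1}$, puts a wall structure on these covers, and proves that the resulting wall metric is both negative definite and coarsely equivalent to the graph metric (the latter via a girth argument).

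For the failure of property~A, you are working much too hard. No relative expanders, no hidden Kazhdan subgroup, no spectral gaps are needed. There is a standard fact (Roe, \emph{Lectures on Coarse Geometry}, Prop.~11.39): if a box space $\bigsqcup_n\Gamma/H_n$ with $\bigcap_n H_n=\{1\}$ has property~A, then $\Gamma$ is amenable. Since $\F_2$ is not amenable, its box space (along any residual chain) automatically fails property~A. Incidentally, your proposed mechanism --- an a-T-menable $\Gamma$ containing a subgroup $K$ with Kazhdan-like behaviour --- runs into the obstacle that the Haagerup property passes to subgroups, so $K$ cannot literally have property~(T); you would need property~$(\tau)$ relative to the chosen filtration, which again is a statement about the filtration, not about $\Gamma$ alone.
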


The concept of coarse embedding was introduced by Gromov~\cite[p. 218]{Gr:asinv}
in relation to the Novikov conjecture (1965) on the homotopy invariance of higher signatures for closed manifolds.

\begin{definition} A metric space $X$
is said to be \emph{coarsely embeddable}  into a Hilbert space $\mathcal{H}$ if there exists a map
$f\colon X\to\mathcal{H}$ such that for any $x_n, y_n\in X$, $n\in \mathbb{N}$,
$$
\hbox{dist} (x_n,y_n)\to\infty\ \ \Longleftrightarrow\ \
\|f(x_n)-f(y_n)\|_{\mathcal{H}}\to \infty.
$$
\end{definition}

Yu established the coarse Baum-Connes conjecture (1995) in topology
 for every coarsely embeddable discrete space with bounded geometry~\cite[Theorem 1.1]{Yu}.
This implies the Novikov conjecture  for all closed manifolds whose
fundamental group, viewed with the word length metric, coarsely embeds into a Hilbert space.
The result confirmed Gromov's intuition and sparked an intense study of groups and
metric spaces which are coarsely embeddable into Hilbert space.

Coarse embeddability, which is a geometric property by nature,
shares many features with Property A, a weak form of amenability, introduced
by Yu using the following F\o lner-type condition.

\begin{definition} A discrete metric space $X$ has \emph{Property A} if for every $\varepsilon > 0$ and every $R > 0$ there is a
family $\{A_x\}_{x\in X}$ of finite subsets of $X\times \mathbb{N}$ and a number $S > 0$ such that
\begin{itemize}
  \item $\displaystyle{\frac{\vert A_x\bigtriangleup A_y\vert}
{\vert A_x \cap A_y\vert}< \varepsilon}$ whenever $d(x, y)\leqslant R$,
  \item $A_x \subseteq B(x, S) \times \mathbb{N}$ for every $x\in X$.
\end{itemize}
\end{definition}

Just as with amenability, Property A has a large number of significant applications, see the survey~\cite{willett}.
For a countable discrete group $\Gamma$,
it is equivalent to the existence of a topological amenable action of $\Gamma$ on a
compact Hausdorff space and to the $C^*$-exactness of the reduced  $C^*$-algebra  $C_r^*(\Gamma)$~\cite{Ozawa,HR}.

Property A implies coarse embeddability~\cite[Th.2.2]{Yu}. Moreover,
discrete spaces with property A provide the largest known class of
spaces admitting such an embedding. Conversely, whether or not the
existence of a coarse embedding of a \emph{bounded geometry\/}
discrete metric space guarantees property A is a crucial open
problem that has attracted much research in the area. (In the case
of unbounded geometry a counterexample was constructed by Nowak
\cite{nowak}.)

Our main result, Theorem~\ref{thm:main}, yields a negative answer to
this problem. Here is the construction of our counterexample.
 For a discrete group $\Gamma$, we denote by $\Gamma^{(2)}$
its normal (in fact characteristic) subgroup generated by all the
squares of elements of $\Gamma$. Let {$\F_2$ be the free group
of rank two. We define inductively a sequence of
normal subgroups of $\F_2$ by letting $\Gamma_0=\F_2$ and
$\Gamma_n=\Gamma_{n-1}^{(2)}$, $n\geqslant 1$. We denote by $X_n$ the Cayley
graph of $\F_2/\Gamma_n$ with respect to the image of the canonical generators of $\F_2$.}

\begin{theorem}\label{thm:box-space}
The space $X=\bigsqcup_{n=0}^{\infty} X_n,$ (the box space of\, $\F_2$  {associated with
the family $(\Gamma_n)_{n\geqslant 1}$)} is coarsely embeddable into a Hilbert space, but does not have property A.
\end{theorem}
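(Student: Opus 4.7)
I plan to prove the two assertions separately. The failure of property A is the easier half. Since each quotient $\F_2/\Gamma_n$ is a finite $2$-group and $\F_2$ is residually a finite $2$-group, $(\Gamma_n)_{n \geq 0}$ is a filtration of $\F_2$ by finite-index normal subgroups with trivial intersection, and $X$ is therefore a bounded-geometry box space of $\F_2$ (uniform discreteness and the degree bound of $4$ being immediate from the construction). A well-known principle asserts that such a box space has property A if and only if the underlying group is amenable. Applied to the non-amenable $\F_2$, it yields that $X$ lacks property A; concretely, any uniform F\o lner-type family witnessing property A on $X$ would descend, via lifting and averaging, to a F\o lner sequence for $\F_2$.

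The substantive content is coarse embeddability, and my strategy would exploit the special iterated structure of the filtration. Because $\Gamma_n$ contains all squares of $\Gamma_{n-1}$, the quotient $V_k := \Gamma_{k-1}/\Gamma_k$ has exponent $2$, so it is an elementary abelian $2$-group, and the covering $X_k \to X_{k-1}$ is Galois with abelian deck group $V_k$. Hence every $X_n$ is an $n$-fold tower of elementary abelian $2$-coverings over a point. I would construct a space-with-walls structure on $X_n$, uniformly in $n$, by pulling back walls from the intermediate quotients: each nontrivial character $\chi\colon V_k \to \{\pm 1\}$ gives a wall on $X_k$ that pulls back along $X_n \to X_k$ to a wall on $X_n$. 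Taking the conditionally negative definite kernel associated with this cumulative wall structure (with suitable weights across the levels $k = 1, \ldots, n$) produces a Hilbert-valued map $f\colon X \to \mathcal{H}$.

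The remaining task, and the main obstacle, is to verify that $f$ is a coarse embedding uniformly in $n$. The Lipschitz upper bound should be routine: each edge of $X_n$ crosses only boundedly many walls at each level, so the contributions sum in a controlled way. The properness bound is the heart of the matter, and I would split it into two regimes. In the small-scale regime, where $d(x,y)$ is less than half the girth of $X_n$, any geodesic from $x$ to $y$ in $X_n$ lifts isometrically to $\F_2$, which is a tree, and so the walls inherited from the top level of the tower reproduce the standard tree embedding of $\F_2$, yielding the desired growth. In the large-scale regime, where $d(x,y)$ is comparable to or exceeds the girth, one must instead leverage walls coming from the deeper levels $k < n$, arguing quantitatively that a far-apart pair in $X_n$ stays separated in $X_k$ for sufficiently many values of $k$. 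I expect this large-scale estimate to be the principal difficulty, since this is precisely where generic box spaces of $\F_2$ (such as those producing expanders) fail to coarsely embed. The crucial input should be that each $\F_2/\Gamma_n$ is a finite $2$-group built from elementary abelian layers, so every layer of the tower is amenable, in sharp contrast with expander-type situations coming from property (T) quotients.
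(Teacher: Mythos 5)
Your treatment of the failure of property A is essentially the paper's: $\bigcap_n\Gamma_n=\{1\}$ (Levi's theorem on iterated squares of a free group), so property A for the box space would force amenability of $\F_2$. That half is fine.

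The coarse embeddability half, however, contains a genuine gap, and you have located it yourself: the ``large-scale regime'' estimate, which you call the principal difficulty, is exactly the step you do not prove, and your proposed route to it (a cumulative, weighted wall structure pulled back from all intermediate quotients $X_k$, $k=1,\dots,n$, plus a quantitative argument that far-apart pairs in $X_n$ remain separated in many $X_k$) is not carried out and is not obviously carriable-out; the vague appeal to amenability of the layers is not an argument. The paper avoids this difficulty entirely, and the way it does so is the real content of the proof. One uses \emph{only} the wall structure on $X_n$ coming from the single covering $X_n\to X_{n-1}$ (the walls are the edge-preimages $\pi^{-1}(e)$, $e\in E(X_{n-1})$). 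The associated wall metric $d_{W_n}$ is automatically a negative-type kernel, so $(X,d_W)$ embeds coarsely in Hilbert space. One then proves that $d_{W_n}$ and the graph metric $d_{X_n}$ \emph{coincide} on pairs at wall-distance less than $\girth(X_{n-1})$; the nontrivial direction rests on a combinatorial lemma about shortest admissible paths in the base graph (if such a path contains a loop, every edge of that loop is traversed exactly once, so the wall distance is at least the girth). Since $\girth(X_{n-1})\to\infty$, this small-scale agreement already yields that $\id\colon(X,d_X)\to(X,d_W)$ is a coarse equivalence: given $R$, for all but finitely many components $d_W(x,y)\leqslant R<\girth(X_{n-1})$ forces $d_X(x,y)=d_W(x,y)\leqslant R$. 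In other words, properness of the embedding is obtained by the contrapositive from the small-scale identity of the two metrics together with the divergence of the girth --- no separate analysis of pairs with $d_X(x,y)$ comparable to or exceeding the girth is ever needed, and no walls from the deeper levels of the tower are used. Your small-scale observation (geodesics shorter than half the girth lift to the tree) points in the right direction but is not the statement actually required; the relevant comparison is between $d_{W_n}$ and $d_{X_n}$ below $\girth(X_{n-1})$, and it requires the path analysis above, not just the tree structure of short balls. As written, your proposal does not constitute a proof of coarse embeddability.
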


 In outline the proof goes as follows. We describe the Cayley graphs $X_n$ in a graph-theoretical
way as a tower of successive $\Z/2$-homology covers, starting with the ``figure eight'' graph.
Next, we define a wall structure on each $X_n$, which gives rise to a wall metric on the graphs $X_n$.
 We show that these graphs, endowed with the wall metric, coarsely embed into a Hilbert space (uniformly).
Their natural graph metric does not coincide with the wall metric. Nevertheless,
we prove that on the small scale it does, which is then enough to conclude that the
two metrics are coarsely equivalent. Consequently, $X$ is also coarsely embeddable.
On the other hand, it is a fact that $\bigcap_{n\geqslant1}\Gamma_n=\{1\}$, so if $X$ would have property A,  {the free group} $\F_2$ would
be amenable, which is certainly not the case.

The structure of the paper is as follows: in Section \ref{sec:covers} we review the construction of graph coverings,
in particular $\Z/2$-homology coverings, and list their properties. In Section \ref{sec:walls}
we define a wall structure on the $\Z/2$-homology cover of a graph and investigate its
relation to the graph metric. In Section \ref{sec:results} we prove Theorem \ref{thm:box-space}. In the last Section \ref{sec:apps} we  {discuss}
applications to C*-algebras,
namely  {to variants of the uniform} Roe $C^*$-algebras.

\section{Constructing graph coverings}\label{sec:covers}

\subsection{Graph terminology}
For a graph $G$, that is, an 1-dimensional simplicial complex, we denote by $V(G)$ the set
of vertices and by $E(G)$ the set of edges of $G$. Our graphs will be unoriented, with a couple
of exceptions which are specified in the text later. We shall talk about an edge $e$ being \emph{between} two vertices $x$ and $y$.
We allow edges with endpoints being the same (sometimes in the literature called ``loops'', but we shall not call them that as not to
confuse them with loops defined below) and multiple edges between pairs of vertices.

A \emph{path} $p$ in $G$ will mean a sequence $(v_0,e_1,v_1,e_2,v_2,\dots,v_n)$, such that $v_i\in V(G)$, $e_i\in E(G)$ and
each $e_i$ is an edge between $v_{i-1}$ and $v_i$. We denote $E(p)=(e_1,e_2,\dots,e_n)$ the sequence of edges of $p$. We also
write $\ell(p)=n$ for the \emph{length} of the path $p$.

A path $l=(v_0,e_1,v_1,e_2,v_2,\dots,v_n)$ is called a \emph{loop}, if $v_0=v_n$ and $n>0$ (we don't
consider one vertex as a loop). The loop $l$ is called \emph{simple}, if there are no repeated vertices or edges, i.e.\ $v_0,\dots,v_{n-1}$ and $e_1,\dots,e_n$ are all different. The \emph{girth} of a graph $G$ is the
length of a shortest simple loop.

When we talk about topological properties of $G$ (e.g.\ connectedness), we consider $G$ as a realization of an $1$-dimensional
simplicial complex determined by the structure of $G$.

\subsection{Graph coverings: general case}

The notion of graph covering is essentially a restriction to the
case of graphs of the general topological notion of covering. For the sake of completeness,
we recall this classical construction (see, for example,~\cite[Ch. 2]{stillwell} or~\cite[Ch. 1]{hatcher})
and subsequently specialize it to the $\Z/2$-homology covering situation.

We start with the following data: a finite connected graph $G$
and a surjective homomorphism $\rho: \pi_1(G)\twoheadrightarrow K$ from the fundamental group of $G$
onto a finite group $K$.

We construct the corresponding \emph{covering graph} $\widetilde G$
of $G$ as follows. Choose a spanning tree $T$ of $G$, that is, a
contractible subgraph $T$ of $G$ with $V(T)=V(G).$  Note that then
necessarily $|E(T)|=|V(T)|-1$. The remaining edges of $G$, denoted
by $S=E(G)\setminus E(T)$, identify $\pi_1(G)$ with the free group
$\F(S)$ on $S$: contracting $T$ to a point gives a homotopy
equivalence of $G$ with a bouquet of $|S|$ circles. With this
identification, we consider the domain of $\rho$ to be $\F(S)$, that
is, $\rho\colon\F(S)\twoheadrightarrow K$. Finally, we choose an
orientation on each edge in $S$, we shall use the self-explanatory
notation $x\buildrel{e}\over\rightarrow y$.
Then the covering graph $\widetilde G$ is defined by:
\begin{itemize}
\item $V(\widetilde G) = V(G)\times K$,
\item $E(\widetilde G) = E(G)\times K$,
 where the incidence of an edge $(e,k)\in E(\tilde G)$ is as follows:
\begin{itemize}
\item if $e\in E(T)$ is an edge between $x$ and $y$, then $(e,k)$ connects $(x,k)$ with $(y,k)$,
\item if $e\in S$ and $x\buildrel{e}\over\rightarrow y$, then $(e,k)$ connects $(x,k)$ with $(y,\rho(e)k)$.
\end{itemize}
\end{itemize}
The \emph{covering projection} $\pi\colon\widetilde G\to G$ is the obvious one $V(\widetilde G)\to V(G)$, $E(\widetilde G)\to E(G)$.
We will refer to the full subgraphs of $\widetilde G$ spanned by the vertices $V(G)\times\{k\}$ as \emph{clouds}.

\begin{figure}[h]
\begin{center}
\ifpdf
\includegraphics{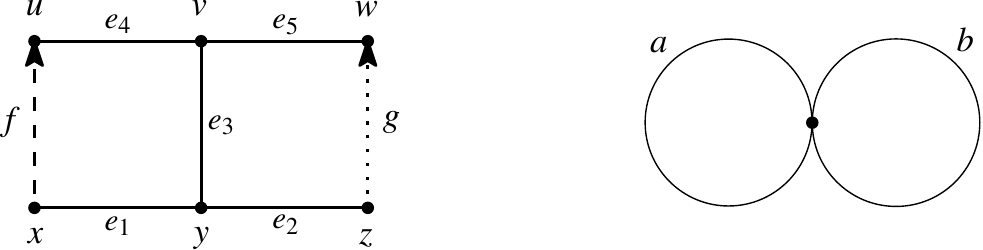}
\else\fi
\end{center}
\caption{The graph $H$ from Example \ref{ex:H-graph} (left); the ``figure eight'' graph (right).}
\label{fig:H-graph}
\end{figure}

\begin{example}\label{ex:H-graph}
Let $H$ be the graph given by Figure \ref{fig:H-graph}.
We choose $S=\{f,g\}$, the dashed and the dotted edge; the remaining edges form the spanning tree $T$.
Let us consider the surjective homomorphism $\rho:\F(S)\twoheadrightarrow \Z/2\times \Z/2$ defined by assigning $\rho(f)=(1,0)$ and $\rho(g)=(0,1)$.
Observe that this is in fact an example of a $\Z/2$-homology covering as described below.

The covering graph $\widetilde H$, see Figure \ref{fig:tildeH},
has four clouds $\text{\framebox{$\bf (i,j)$}}=V(G)\times\{(i,j)\}$, where $i,j\in\{0,1\}$.
Within these clouds, one sees copies of the spanning tree $T$. The edges from $S$ connect different clouds according to the action of $\rho$.
Note that collapsing clouds to points yields the Cayley graph of $K=\Z/2\times\Z/2$ with respect to the generating set $\rho(S)$.
\end{example}

\begin{figure}[h]
\begin{center}
\ifpdf
\includegraphics{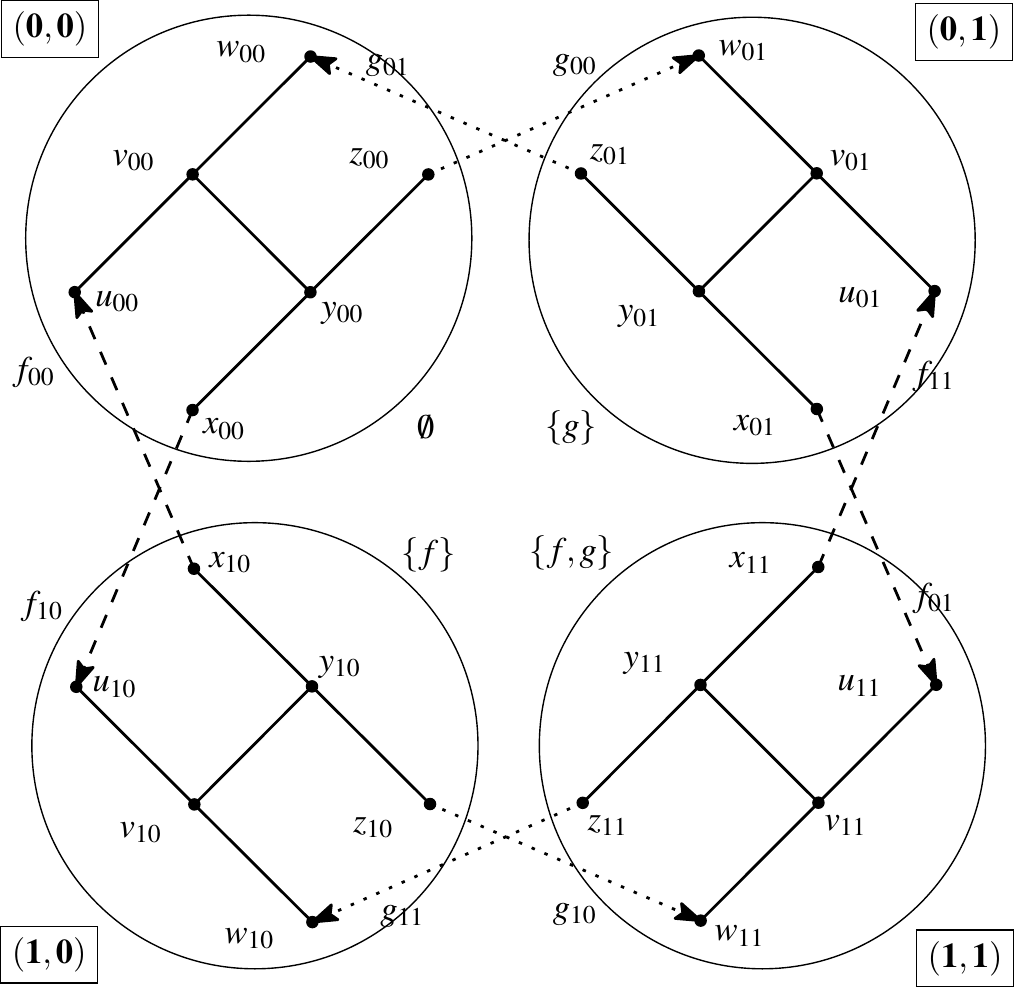}
\else\fi
\end{center}
\caption{The graph $\widetilde H$ from Example \ref{ex:H-graph}.}
\label{fig:tildeH}
\end{figure}

Let us summarize basic properties of this construction, see, for example,~\cite[Ch. 2]{stillwell}.
\begin{proposition}
Let $G$ be a finite connected graph, $\rho: \pi_1(G)\twoheadrightarrow K$ be a surjective homomorphism, and
let $\widetilde G$ be the covering graph constructed above. Then the following hold.
\begin{itemize}
\item The covering projection $\pi\colon \widetilde G\to G$ enjoys the unique path lifting property and the homotopy lifting property.
\item $\widetilde G$ does not depend on the choices made, i.e.\ on $T$ and on the orientation of edges in $S$.
It is unique, up to graph isomorphism commuting with the projections.
\item There is a short exact sequence $1\to \pi_1(\widetilde G)\buildrel\pi_*\over\hookrightarrow \pi_1(G) \buildrel\rho\over\twoheadrightarrow K\to 1$
and the  group of deck transformations of the covering is $K$.
\end{itemize}
\end{proposition}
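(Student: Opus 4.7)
The plan is to reduce everything to classical covering space theory. First, I would verify that $\pi\colon\widetilde G\to G$ is a covering projection in the topological sense on the geometric realizations: over each vertex $v\in V(G)$ one sees $|K|$ discrete preimages, and over each open edge one sees $|K|$ disjoint open arcs, so the usual evenly-covered neighborhoods exist. The unique path and homotopy lifting properties of the first bullet point then follow from the general theory of covering spaces applied to the simplicial realization of $\pi$.

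Next I would compute $\pi_*\pi_1(\widetilde G)$ to establish the short exact sequence of the third bullet point. Fix basepoints $v_0\in V(G)$ and $\tilde v_0 = (v_0,1_K)$. Inspecting the incidence relations defining $\widetilde G$, traversing an edge $e\in S$ (in either direction) multiplies the $K$-coordinate by $\rho(e)^{\pm 1}$, whereas traversing a tree edge preserves the $K$-coordinate. By unique path lifting, a loop $\gamma$ at $v_0$ therefore lifts to a path from $\tilde v_0$ whose endpoint has $K$-coordinate determined by $\rho(\gamma)$; in particular the lift is a loop iff $\gamma\in\ker\rho$. Combined with the injectivity of $\pi_*$ for covering maps, this yields $\pi_*\pi_1(\widetilde G)=\ker\rho$ and the desired short exact sequence. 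The deck transformation group is then identified with $K$ via the explicit right $K$-action $(v,k)\mapsto(v,kk_0)$ (and similarly on edges): a direct case check shows that this preserves the incidence of both tree edges and $S$-edges, the action is free and transitive on each fiber, and since any deck transformation is determined by its value on $\tilde v_0$ it must coincide with one of these maps.

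The step requiring most care is the second bullet point, independence from the spanning tree $T$ and from the orientations chosen on $S$. The most economical route is to invoke the classification of connected coverings of $G$ by conjugacy classes of subgroups of $\pi_1(G,v_0)$: the kernel $\ker\rho$ is intrinsic to the abstract homomorphism $\rho$, and by the computation above $\widetilde G$ is precisely the connected cover associated with $\ker\rho$, so classification forces uniqueness up to isomorphism commuting with $\pi$. One must keep careful track of basepoints and remember that although the combinatorial construction presents $\rho$ as a homomorphism from $\F(S)$ (which itself depends on $T$), the actual subgroup $\ker\rho\leqslant \pi_1(G,v_0)$ is independent of this auxiliary presentation. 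If one prefers a direct combinatorial argument, changing the orientation of an edge $e\in S$ is globally undone by permuting $K$-coordinates within one cloud, and passing between spanning trees can be reduced to a finite sequence of elementary moves, each of which is again absorbed by a relabeling of clouds; I would however favor the classification argument as cleaner and less prone to bookkeeping errors.
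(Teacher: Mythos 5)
Your proposal is correct; note that the paper offers no proof of this proposition at all, simply citing standard references on covering space theory, and your argument is exactly the classical one being invoked there (even covering of stars, lifting criteria, identification of $\pi_*\pi_1(\widetilde G)$ with $\ker\rho$ by tracking the $K$-coordinate, and the classification of connected covers for uniqueness). Your attention to the one genuinely delicate point --- that $\ker\rho\leqslant\pi_1(G,v_0)$ is intrinsic even though the identification $\pi_1(G)\cong\F(S)$ depends on the spanning tree --- is exactly where care is needed, and you handle it properly.
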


Observe that if $G$ is the ``figure eight'', see Figure~\ref{fig:H-graph},
and we identify $\pi_1(G)$ with the free group $\F_2$ generated by $a$ and $b$,
then $\widetilde G$ is the Cayley graph of $K$ with respect to the generators $\rho(a)$ and $\rho(b)$.

The following classical result follows from the fundamental theorem of
Galois theory for graph coverings, see, for example,~\cite[Section 1.3]{hatcher}.

\begin{proposition}[Transitivity of covers]\label{prop:transitivity}
Let $G$ be a finite connected graph and $L=\pi_1(G)$ denote its fundamental group. Let $M$ and $N$ be two normal subgroups of $L$
satisfying $M\subset N$. Let us denote by
\begin{itemize}
\item $\pi^M:G^M\to G$  and $\pi^N:G^N\to G$ the coverings of $G$ corresponding to the surjective
homomorphisms $\rho^M\colon L\twoheadrightarrow L/M$ and
 $\rho^N\colon L\twoheadrightarrow L/N$, respectively;
\item $\pi^M_N:G^M_N\to G^N$ the covering of $G^N$ corresponding to $\rho^M_N\colon N\twoheadrightarrow N/M$.
(Observe that $\pi_1(G^N)\cong N$ by the basic properties of the construction.)
\end{itemize}
Then, $\pi^M:G^M\to G$ is isomorphic to $\pi^M_N\circ\pi^N:G^M_N\to G$ as graph coverings of $G$.
$$
\xymatrix{G^M\ar[dd]_{\pi^M}\ar[r]^{\simeq}& G^M_N\ar[d]^{\pi^M_N}\\
 & G^N\ar[ld]^{\pi^N}\\
 G &
}
$$
\end{proposition}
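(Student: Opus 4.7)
The plan is to invoke the uniqueness clause of the preceding proposition. It suffices to exhibit $\pi^M_N\circ\pi^N:G^M_N\to G$ as a graph covering of $G$ whose classifying surjection from $L=\pi_1(G)$ is $\rho^M$; uniqueness will then produce the desired graph isomorphism $G^M\xrightarrow{\simeq} G^M_N$ commuting with the projections.

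First, I would check that the composition $\pi^M_N\circ\pi^N$ is itself a covering. Both $\pi^N$ and $\pi^M_N$ possess the unique path and homotopy lifting properties by the preceding proposition, and these properties pass to compositions by a standard two-step argument: a path in $G$ lifts uniquely through $\pi^N$ to $G^N$, and the resulting path lifts uniquely through $\pi^M_N$ to $G^M_N$; homotopies are handled identically. The number of sheets multiplies, giving $[L:N]\cdot [N:M]=[L:M]$ sheets, matching $\pi^M$.

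Next, I would identify the subgroup of $L$ classifying the composite cover. Applying the short exact sequence of the preceding proposition to $\pi^N$, the inclusion $(\pi^N)_*$ identifies $\pi_1(G^N)$ with $N\subset L$. Applying the same proposition to $\pi^M_N$, viewed as a cover of $G^N$ with fundamental group $N$, the inclusion $(\pi^M_N)_*$ identifies $\pi_1(G^M_N)$ with $M\subset N$. Composing,
$$(\pi^M_N\circ\pi^N)_*\bigl(\pi_1(G^M_N)\bigr)\;=\;(\pi^N)_*(M)\;=\;M\;\subset\;L,$$
which is normal, so the composite cover is regular with deck group $L/M$, i.e.\ it is classified by $\rho^M$.

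Finally, I would conclude by the uniqueness-up-to-isomorphism clause of the preceding proposition: any two coverings of $G$ associated with the same surjection $L\twoheadrightarrow L/M$ are isomorphic as coverings of $G$. The only subtle point in this plan is the composition-of-covers argument; once the lifting properties are invoked, everything else is a routine diagram chase with the short exact sequence, and no new combinatorial work on the explicit construction of $\widetilde{G}$ is required.
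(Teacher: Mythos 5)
The paper does not actually prove this proposition: it is stated as a classical consequence of the fundamental theorem of Galois theory for graph coverings, with a citation to Hatcher, Section 1.3. Your sketch supplies the standard argument one would find there, and its overall shape is sound: the composite of two finite-sheeted covering maps of graphs is again a covering, the image subgroup $(\pi^M_N\circ\pi^N)_*(\pi_1(G^M_N))$ is $M$ by the two short exact sequences, and normality of $M$ in $L$ makes the composite regular with deck group $L/M$.

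The one step you should be careful about is the final appeal to ``the uniqueness-up-to-isomorphism clause of the preceding proposition.'' As stated in the paper, that clause only asserts that the \emph{explicit construction} of $\widetilde G$ from $(G,\rho)$ is independent of the choices of spanning tree and edge orientations. It does not, by itself, say that \emph{any} connected covering of $G$ whose image subgroup equals $\ker\rho^M=M$ is isomorphic to the constructed $G^M$; your composite $\pi^M_N\circ\pi^N$ is not a priori an instance of the construction for any choice of spanning tree. Closing that gap requires precisely the classification of connected coverings by subgroups of $\pi_1(G)$ (existence and uniqueness of the covering corresponding to a given subgroup), which is the Galois correspondence the paper cites. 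So your argument is correct modulo upgrading the uniqueness statement from ``independence of choices'' to the full classification theorem --- which is standard, but is the actual content being invoked, and is worth stating explicitly rather than attributing to the preceding proposition.
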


\subsection{Graph coverings: $\Z/2$-homology case}\label{subsec:z2}

Let us now specialize to the $\Z/2$-homo\-logy coverings. Given a
finite graph $G$, there is always a surjective homomorphism $\rho$
from the fundamental group $\pi_1(G)$ to the first $\Z/2$-homology
group of $G$. Indeed, it is the quotient map
$\rho:\pi_1(G)\twoheadrightarrow \pi_1(G)/\pi_1(G)^{(2)}$. Recall
that  $\Gamma^{(2)}$ denotes the normal subgroup generated by all
the squares of elements in a group $\Gamma$. The resulting quotient
is always the $n$-fold direct sum of $\Z/2$'s, where $n$ is the rank
of the free group $\pi_1(G)$.

It is convenient to label the elements of the
group $K\cong (\Z/2)^{|S|}$ by the elements of the power set $\mathcal{P}(S)$ of
$S$; thinking of factors of $(\Z/2)^{|S|}$ indexed by
elements of $S$, a subset $\tau\subset S$ denotes the element of $(\Z/2)^{|S|}$
having $1$ at places indexed by elements from $\tau$ and $0$ at the
remaining places. In this description, an edge from $S$ acts on $\mathcal{P}(S)$
by ``symmetric difference'': for $\tau\subset S$ and $e\in S$, we have
$\rho(e)\tau=\tau\triangle\{e\}$.

Summarizing, given a finite connected graph $G$, choose a spanning tree $T$ of $G$ and orient the edges in $S=E(G)\setminus E(T)$ arbitrarily.
The vertices of $\widetilde G$ will be indexed by $V(G)\times\mathcal{P}(S)$,
the edges by $E(G)\times \mathcal{P}(S)$. An edge $(e,\tau)$, where $e\in E(G)$ between $x,y\in V(G)$, $\tau\subset S$, connects
\begin{itemize}
\item $(x,\tau)$ and $(y,\tau)$ if $e\not\in S$,
\item $(x,\tau)$ and $(y,\tau\triangle\{e\})$ if $e\in S$ and $x\buildrel e\over\rightarrow y$.
\end{itemize}

Observe that if we collapse clouds of $\widetilde G$ to points, we obtain the Cayley graph of $(\Z/2)^{|S|}$
with respect to its natural set of generators; that is, an $|S|$-dimensional cube.

\begin{example}\label{ex:H-as-Z2}
The graph $\widetilde H$ from Example \ref{ex:H-graph} is an instance of a $\Z/2$-homology cover;
According to our specific labelling, the elements of $\Z/2\times\Z/2$ are encoded by
the subsets of $\{f,g\}$. Here this encoding is as
follows: \framebox{$\bf (0,0)$}$\to \emptyset$, \framebox{$\bf (1,0)$}$\to\{f\}$, \framebox{$\bf (0,1)$}$\to\{g\}$, \framebox{$\bf (1,1)$}$\to\{f,g\}$.
\end{example}

\section{Walls}\label{sec:walls}

In this section, we first briefly recall the notions of walls and
wall structures on a graph. Our formulation lies between the
classical concept of a graph cut and a more involved notion of a
wall structure associated to certain polyhedral complexes
\cite{haglund-paulin}. Next, we construct a wall structure on
$\Z/2$-homology covers from the previous section, and establish a
relationship between the wall metric and the graph metric in this
context. Finally, in Proposition \ref{prop:same-balls}, we show that
the two metrics agree on small scale (depending on the girth of the
graphs involved).

\begin{definition}
Let $G$ be a finite graph. A \emph{wall} (or a \emph{cut}) on $G$ is a set of edges of $G$, such that removing them from $G$ separates $G$
into exactly two connected components. We shall refer to the components as to \emph{half-spaces} associated to the wall.

A \emph{wall structure} $W$ on a graph $G$ is a set of walls on $G$, such that each edge is contained in exactly one wall from $W$.

We say that a wall $w$ \emph{separates} two vertices of $G$, if they reside in different half-spaces associated to $w$.

Given a wall structure $W$ on $G$, we denote by $d_W(x,y)$ the
number of walls in $W$ separating the vertices $x$ and $y$ of $G$.
Then $d_W$ is a pseudo-metric\footnote{It turns out later in this
section that in our case it is in fact a metric. We call $d_W$ a
metric right away.} on $G$, which we call the \emph{wall metric}
associated to the wall structure $W$.
\end{definition}

Note that a graph need not admit any wall structure.

 Let us now specialize to the situation of the previous section.
\begin{definition}\label{def:z2-pair}
A pair $(\widetilde G,G)$ is said to be a \emph{$\Z/2$-pair}, if  $G$ is a
finite $2$-connected\footnote{A graph is \emph{2-connected} if every edge lies on a simple loop, or
equivalently, if removing any single edge does not disconnect the graph.} graph and $\widetilde G$
is its $\Z/2$-homology cover.
\end{definition}

Recall that $T$ denotes a chosen spanning tree in a finite graph $G$ and $S=E(G)\setminus E(T)$.
For an edge $e\in E(G)$ and the covering map $\pi:\widetilde G\to G$, we
denote $w_e=\pi^{-1}(e)\subset E(\widetilde G)$ and $W=\{w_e\mid e\in E(G)\}$.

\begin{lemma}\label{lem:separating}
If $(\widetilde G,G)$ is a $\Z/2$-pair, each $w_e$ separates $\widetilde G$ into exactly two connected components.
Thus, $W$ is a wall structure on $\widetilde G$.
\end{lemma}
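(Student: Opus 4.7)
The plan is to attach to each $e\in E(G)$ a natural $\Z/2$-valued ``side'' function $\phi_e$ on $V(\widetilde G)$ that is flipped precisely along edges of $w_e$. This immediately forces $\widetilde G\setminus w_e$ to split into at least two parts, and the main task is then to show each part is connected by identifying it with a familiar smaller cover.

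First I reduce to the case $e\in S$. Since $G$ is $2$-connected, $e$ is not a bridge, so $G\setminus\{e\}$ is connected and has a spanning tree $T'$; this $T'$ is simultaneously a spanning tree of $G$ not containing $e$. By the independence of $\widetilde G$ from the choice of spanning tree, I may use $T'$ and thereby arrange $e\in S:=E(G)\setminus E(T')$. Define $\phi_e\colon V(\widetilde G)\to\Z/2$ by $\phi_e(x,\tau)=1$ if $e\in\tau$ and $\phi_e(x,\tau)=0$ otherwise. Inspecting the incidence rules of Section~\ref{subsec:z2}: tree edges leave $\tau$ unchanged, non-tree edges indexed by $f\in S\setminus\{e\}$ alter $\tau$ only in a coordinate $f\ne e$, whereas edges in $w_e$ flip the $e$-coordinate of $\tau$. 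So every edge outside $w_e$ preserves $\phi_e$ while every edge of $w_e$ flips it; the two level sets of $\phi_e$ thus partition $V(\widetilde G)$ into nonempty unions of components of $\widetilde G\setminus w_e$, giving at least two.

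To upgrade ``at least two'' to ``exactly two'', I claim that the induced subgraph of $\widetilde G\setminus w_e$ on $\phi_e^{-1}(0)$ is, edge-for-edge, the $\Z/2$-homology cover of $G\setminus\{e\}$ constructed from the spanning tree $T'$ and labelling set $S\setminus\{e\}$: the identification $\{\tau\subseteq S:e\notin\tau\}=\mathcal{P}(S\setminus\{e\})$ matches the vertex sets, and the incidence rules of Section~\ref{subsec:z2} transport verbatim across it. Since $G\setminus\{e\}$ is connected, its $\Z/2$-cover is also connected---any $(x,\tau)$ reaches a fixed basepoint $(x_0,\emptyset)$ by moving inside clouds along tree edges to successive endpoints of the non-tree edges listed in $\tau$ and traversing those one by one---and an analogous argument, or the deck transformation $\tau\mapsto\tau\triangle\{e\}$, handles $\phi_e^{-1}(1)$. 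Thus $w_e$ separates $\widetilde G$ into exactly two components. Since each edge of $\widetilde G$ carries a unique label $(e,\tau)$, it lies in exactly one $w_e$, and $W$ is a wall structure. The one subtle point is the spanning-tree change: this is legitimate because $w_e=\pi^{-1}(e)$ depends only on the intrinsic covering projection $\pi$, which is unique up to graph isomorphism commuting with it; so nothing is lost by switching from $T$ to $T'$ for the analysis of a particular $e$.
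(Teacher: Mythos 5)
Your proposal is correct and follows essentially the same route as the paper: reduce to $e\in S$ by re-choosing the spanning tree (using $2$-connectedness and uniqueness of the cover), split $V(\widetilde G)$ according to whether $e\in\tau$, and show each half is connected by moving within clouds along the tree and between clouds via the non-tree edges other than $e$. Your extra step of identifying each half-space with the $\Z/2$-homology cover of $G\setminus\{e\}$ is a clean packaging of the same connectivity argument the paper gives directly.
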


\begin{proof}
If $e\in S$, denote $A=\{\tau\subset S\mid e\not\in \tau\}$ and
$B=\{\tau\subset S\mid e\in \tau\}$. We claim that removing
$w_e=\pi^{-1}(e)$ from $\widetilde G$, we obtain two components,
which happen to be the full subgraphs $\widetilde A$ and $\widetilde
B$ of $\widetilde G$ spanned by the sets of vertices $V(G)\times A$
and $V(G)\times B$, respectively\footnote{Thinking of $\widetilde G$
roughly as a cube (vertices being clouds), this splitting
corresponds to choosing a coordinate direction and taking ``the
front'' and ``the back'' of the cube. The wall consists of edges
connecting front to back and vice versa.}. Indeed, if an
edge in $w_e$ connects two vertices $(\cdot,\tau)$ and
$(\cdot,\tau')$ in $V(\widetilde G)$, then $\tau\triangle
\tau'=\{e\}$, thus one of the vertices is in $V(\widetilde A)$ and
the other in $V(\widetilde B)$. Conversely, in a similar way, any
edge between $\widetilde A$ and $\widetilde B$ has to be in $w_e$.
Observe that $\widetilde A$ is connected: we can move within the
clouds along the copies of the spanning tree $T$, and from cloud to
cloud within $\widetilde A$ because if $\tau,\tau'\subset A$, then
$\tau\triangle \tau'$ never contains $e$. The same argument works
for $\widetilde B$. Thus, the set $w_e$ is a wall whenever $e\in S$.

Since $G$ is 2-connected, given any edge $e\in E(G)$, we may
choose a spanning tree $T$ of $G$ which does not contain $e$ (removing $e$ from $G$ leaves it connected,
so choose a spanning tree there). Such a choice makes $e$ belong to the corresponding $S$, so the argument
above applies. Here we use the fact that the covering $\pi:\widetilde G\to G$ is unique,
the choices just give a different labelling.

In fact, one can avoid the preceding re-labelling trick and give a
direct argument for edges in $E(T)$. It turns out that even if $G$ is not $2$-connected,
$w_e$ for $e\in E(T)$ separates $\widetilde G$ into at least two components;
using $2$-connectedness one gets that there are at most 2 components.
We leave this as an exercise for the reader.
\end{proof}

We emphasize that $\widetilde G$ is now endowed with two metrics: $d_{\widetilde G}$, the usual graph metric and $d_W$,
the wall metric with respect to the wall structure $W$ constructed above. The crucial aspect is to compare these two metrics.
This task will occupy the rest of this section. We begin with an easy observation.

\begin{proposition}\label{prop:WlessG}
If $(\widetilde G,G)$ is a $\Z/2$-pair, we have $d_W(x,y)\leqslant d_{\widetilde G}(x,y)$ for all $x,y\in V(\widetilde G)$.
\end{proposition}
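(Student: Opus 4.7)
The plan is to argue edge-by-edge along a geodesic. Fix vertices $x, y \in V(\widetilde G)$ and let $p = (x = v_0, e_1, v_1, \ldots, e_n, v_n = y)$ be a geodesic path in $\widetilde G$, so $n = d_{\widetilde G}(x,y)$. The goal is to show that at least $d_W(x,y)$ of the edges $e_1, \ldots, e_n$ lie in walls that separate $x$ from $y$, and that these edges are all distinct walls.

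The key observation is a parity argument. Suppose a wall $w \in W$ separates $x$ from $y$, so $x$ and $y$ lie in the two distinct connected components of $\widetilde G$ obtained by removing $w$. Traversing $p$, the endpoint of the path starts in the half-space containing $x$ and ends in the half-space containing $y$; since the only edges of $\widetilde G$ whose two endpoints lie in opposite half-spaces of $w$ are precisely the edges of $w$ (by definition of a wall), the path $p$ must contain at least one edge belonging to $w$.

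Now I use the defining property of a wall structure: every edge of $\widetilde G$ is contained in \emph{exactly one} wall of $W$. Consequently, for two distinct separating walls $w \neq w'$, the edges of $p$ witnessing the crossings of $w$ and of $w'$ must themselves be distinct edges. Letting $S_{x,y} \subset W$ denote the set of walls separating $x$ from $y$, we thus obtain an injection from $S_{x,y}$ into $\{e_1, \ldots, e_n\}$ by choosing, for each $w \in S_{x,y}$, one edge of $p$ lying in $w$. This yields $d_W(x,y) = |S_{x,y}| \leq n = d_{\widetilde G}(x,y)$.

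There is no substantive obstacle here; the statement is the soft direction of the comparison between the two metrics. The whole argument rests only on the purely combinatorial facts that (i) any path between vertices on opposite sides of a wall must cross that wall (a connectedness statement in $\widetilde G \setminus w$), and (ii) walls in a wall structure partition $E(\widetilde G)$. All the work in comparing $d_W$ and $d_{\widetilde G}$ lies in the reverse, non-trivial direction, which the authors defer to the remainder of the section.
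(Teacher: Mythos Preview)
Your proof is correct and follows essentially the same approach as the paper: a separating wall must be crossed by any geodesic, and since walls partition the edge set, distinct separating walls are witnessed by distinct edges of the geodesic, giving $d_W(x,y)\leqslant \ell(p)=d_{\widetilde G}(x,y)$. The paper's proof is just a terser version of exactly this argument.
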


\begin{proof}
If a wall separates $x$ and $y$, any path between $x$ and $y$ necessarily uses at least one of the edges from the wall.
Since different walls are disjoint, any shortest path between $x$ and $y$ has to traverse at least $d_W(x,y)$ edges.
\end{proof}

The following notion is used to characterize $d_{\widetilde G}$ and $d_W$ in terms of paths in $G$.

\begin{definition}
Let $(\widetilde G,G)$ be a $\Z/2$-pair.
Given $x=(x_0,\tau_x)$ and $y=(y_0,\tau_y)$ from $V(\widetilde G)=V(G)\times \mathcal{P}(S)$, we say that
a path $p$ in $G$ is \emph{$(x,y)$--admissible}, if
\begin{itemize}
\item[(i)] $p$ begins at $x_0$ and ends at $y_0$,
\item[(ii)] for every $e\in S$, the parity of the number of times that $e$ appears in
$E(p)$ is equal to $\begin{cases}0&\text{if }e\not\in \tau_x\triangle \tau_y\\ 1&\text{if }e\in \tau_x\triangle \tau_y\end{cases}$.
\end{itemize}
\end{definition}

\begin{lemma}\label{lem:adm-bij}
The $(x,y)$--admissible paths in $G$ are in bijection with paths from $x$ to $y$ in $\widetilde G$. Moreover, this correspondence
preserves the path length.
\end{lemma}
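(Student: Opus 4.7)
The plan is to use the unique path lifting property of the covering $\pi\colon\widetilde G\to G$. Given a path $\tilde p$ from $x$ to $y$ in $\widetilde G$, its image $p=\pi(\tilde p)$ is a path in $G$ from $x_0$ to $y_0$ of the same length. Conversely, given any path $p$ in $G$ starting at $x_0$, the lifting property produces a unique path $\tilde p$ in $\widetilde G$ starting at $x$ with $\pi(\tilde p)=p$, again of the same length. I will show that, under this correspondence, the admissibility of $p$ is exactly equivalent to the endpoint of $\tilde p$ being the prescribed vertex $y$; this gives the desired length-preserving bijection.

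The key step is to track the second coordinate along the lift. By the construction in Subsection~\ref{subsec:z2}, an edge in $E(T)$ lying over $e$ joins $(\cdot,\tau)$ to $(\cdot,\tau)$, while an edge lying over $e\in S$ joins $(\cdot,\tau)$ to $(\cdot,\tau\triangle\{e\})$. Consequently, if $p=(x_0,e_1,v_1,\dots,e_n,y_0)$ and we set $\sigma_p=\{e\in S\mid e \text{ appears an odd number of times in } E(p)\}$, then by induction on the length of $p$ the unique lift of $p$ starting at $(x_0,\tau_x)$ ends at $(y_0,\tau_x\triangle\sigma_p)$: each traversal of an edge over $e\in S$ toggles $e$ in the second coordinate, while edges over $E(T)$ leave it unchanged.

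Hence the lift ends at $y=(y_0,\tau_y)$ if and only if $\tau_x\triangle\sigma_p=\tau_y$, i.e.\ $\sigma_p=\tau_x\triangle\tau_y$, which is precisely condition (ii) in the definition of admissibility; condition (i) is automatic since the lift starts at $x$ above $x_0$ and we require its terminal vertex to lie above $y_0$. Thus $p$ is $(x,y)$--admissible exactly when $\tilde p$ runs from $x$ to $y$. The two assignments $\tilde p\mapsto \pi(\tilde p)$ and $p\mapsto\tilde p$ are mutually inverse by the uniqueness of lifts, and they preserve length because $\pi$ maps each edge of $\widetilde G$ bijectively onto an edge of $G$ at every step. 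There is no real obstacle here; the only point requiring care is the parity bookkeeping in the second paragraph, and this is immediate from the $\Z/2$-homology description of the cover.
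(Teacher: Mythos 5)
Your proof is correct and follows essentially the same route as the paper's: both arguments rest on unique path lifting and on tracking the second coordinate, which is toggled by $\triangle\{e\}$ when crossing an edge over $e\in S$ and unchanged over $E(T)$, so that admissibility condition (ii) is exactly the condition that the lift terminates at $y$. Your formulation via the set $\sigma_p$ of odd-multiplicity edges is a slightly tidier packaging of the same parity bookkeeping.
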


\begin{proof}
Take $x,y\in V(\widetilde G)$. We can represent $x=(x_0,\tau_x)$, $y=(y_0,\tau_y)$, where $\tau_x$ and $\tau_y$ are subsets of $S$.
Let $\widetilde p$ be a path in $\widetilde G$ between $x$ and $y$. The projection $\pi(\widetilde p)$ obviously satisfies (i).

Now consider the sequence of the second coordinates of the vertices appearing in $\widetilde p$, say $(\tau_x,\tau_1,\tau_2,\dots,\tau_n,\tau_y)$.
Then each two consecutive ones are either the same (if the edge between them is in $\pi^{-1}(E(T))$, i.e.\ we remain in the same cloud),
or they differ by exactly one edge $\{e\}$ (if the edge between them is in $\pi^{-1}(e)$, $e\in S$, i.e.\ we move between the clouds).
Since we begin with $\tau_x$ and end with $\tau_y$, then the parity of the number of times that some edge from $\pi^{-1}(e)$
is used in $\widetilde p$ depends only on whether $e$ belongs to $\tau_x\triangle \tau_y$ or not. Thus (ii) follows.

Conversely, if $p=(x_0,e_1,v_1,\dots,e_k,y_0)$ is an $(x,y)$--admissible path,
it has a unique lift $\widetilde p$ in $\widetilde G$ starting at $x\in V(\widetilde G)$.
By construction of $\widetilde G$,  the sequence $E(p)=(e_1,e_2,\dots,e_k)$ determines
the sequence $(\tau_x=\tau_0,\tau_1,\dots,\tau_k)$ of the second coordinates of the vertices in $\widetilde p$ by the following
inductive rule: if $e_i\in E(T)$, then $\tau_i=\tau_{i-1}$; if $e_i\in S$ then $\tau_i=\tau_{i-1}\triangle \{e_i\}$.
Now the condition (ii) plus this rule implies that the last one, $\tau_k$, must be equal to $\tau_y$.
Hence the endpoint of $\widetilde p$ is $(y_0,\tau_y)=y$ and we are done.

The claim about preserving the lengths is obvious.
\end{proof}

\begin{example}
Referring again to Example \ref{ex:H-graph} and Figure \ref{fig:H-graph}, the path
$$p_0=(x,e_1,y,e_3,v,e_5,w,g,z,e_2,y,e_1,x)$$
 is $((x,\emptyset), (x,\{g\}))$--admissible. It is the projection under $\pi$ of the path in $\widetilde G$ given
 by the sequence of vertices $(x_{00},y_{00},v_{00},w_{00},z_{01},y_{01},x_{01})$. In fact, $p_0$ is a
 shortest $((x,\emptyset), (x,\{g\}))$--admissible path. So, using Propositions below, $d_{\widetilde G}((x,\emptyset),(x,\{g\}))=6$,
 but $d_W((x,\emptyset),(x,\{g\}))=4$. The point is that the wall $w_{e_1}$ does not separate $(x,\emptyset)$ and $(x,\{g\})$, it is
 used on $p_0$ twice.
\end{example}

The next proposition follows readily from Lemma \ref{lem:adm-bij}.

\begin{proposition}\label{prop:graph-dist}
If $(\widetilde G,G)$ is a $\Z/2$-pair, for any $x,y\in V(\widetilde G)$ we have
$$
d_{\widetilde G}(x,y)=\min\{\ell(p)\mid p\text{ is an }(x,y)\text{--admissible path in }G \}.
$$
\end{proposition}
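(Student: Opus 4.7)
My plan is to derive the equality as an immediate corollary of Lemma \ref{lem:adm-bij}. First, I unpack the graph-metric side:
\[
d_{\widetilde G}(x,y) = \min\{\ell(\widetilde p)\mid \widetilde p\text{ is a path from }x\text{ to }y\text{ in }\widetilde G\},
\]
observing that the minimum is attained (path lengths are non-negative integers) and that the set on the right is non-empty, since $\widetilde G$ is connected — a consequence of connectedness of $G$ together with surjectivity of $\rho$ onto $K=(\Z/2)^{|S|}$.

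Next, I invoke Lemma \ref{lem:adm-bij}, which supplies a length-preserving bijection between paths from $x$ to $y$ in $\widetilde G$ and $(x,y)$-admissible paths in $G$. Concretely, the forward direction is the covering projection $\widetilde p \mapsto \pi(\widetilde p)$, which satisfies the admissibility conditions (i) and (ii); the inverse is unique path lifting starting at $x$, under which condition (ii) forces termination at $y$. Because this bijection preserves the length function $\ell$, the minimum over the left-hand set equals the minimum over the right-hand set, which is precisely the stated identity.

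There is no genuine obstacle beyond Lemma \ref{lem:adm-bij} itself; the proposition is essentially a repackaging of that lemma at the level of minima, and this is why the text asserts that it "follows readily". The only verification worth flagging is the attainment of the minimum and the non-emptiness of the paths in question, both of which are immediate from connectedness and the discreteness of the length spectrum.
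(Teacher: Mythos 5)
Your proposal is correct and matches the paper exactly: the paper gives no separate argument, stating only that the proposition ``follows readily from Lemma~\ref{lem:adm-bij}'', and your derivation via the length-preserving bijection (plus the routine checks of non-emptiness and attainment of the minimum) is precisely that intended argument.
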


\begin{remark}\label{rem:no-backtracks}
Note that no shortest $(x,y)$--admissible path has a backtrack (i.e.\ a subpath $(v_i,e_{i+1},v_{i+1},e_{i+2},v_{i+2})$,
which satisfies $v_i=v_{i+2}$, $e_{i+1}=e_{i+2}$). The reason is that removing such a backtrack from
the path does not change admissibility of the path, thus providing a shorter admissible path.
\end{remark}

\begin{proposition}\label{prop:wall-dist}
If $(\widetilde G,G)$ is a $\Z/2$-pair, for any $x,y\in V(\widetilde G)$, $d_W(x,y)$ is equal to the number of edges appearing an odd
number of times in any $(x,y)$--admissible path in $G$.
\end{proposition}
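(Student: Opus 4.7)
The plan is to use the lift-to-projection bijection of Lemma \ref{lem:adm-bij} together with the explicit description of the half-spaces of each wall $w_e$ extracted from the proof of Lemma \ref{lem:separating}. Fix an $(x,y)$--admissible path $p$ in $G$ and let $\widetilde p$ be its unique lift from $x$ to $y$ in $\widetilde G$. Since $w_e=\pi^{-1}(e)$, the number of times an edge $e\in E(G)$ occurs in $E(p)$ equals the number of edges of $\widetilde p$ lying in $w_e$; I will call the latter the number of times $\widetilde p$ \emph{crosses} the wall $w_e$.

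The key observation I would then isolate is: traversing an edge of $w_e$ moves the endpoint from one half-space of $\widetilde G\setminus w_e$ to the other, while traversing any other edge of $\widetilde G$ keeps the endpoint within a single half-space. This is exactly what the two-component picture in the proof of Lemma \ref{lem:separating} establishes (edges in $w_e$ connect the two components, everything else stays within one component). Walking along $\widetilde p$ therefore toggles sides of $w_e$ precisely at its crossings of $w_e$, so $x$ and $y$ lie in different half-spaces of $w_e$ if and only if $\widetilde p$ crosses $w_e$ an odd number of times, equivalently, iff the edge $e$ appears an odd number of times in $E(p)$. Thus
$$
d_W(x,y)=|\{e\in E(G)\mid e\text{ occurs an odd number of times in }E(p)\}|,
$$
where I have also used that $e\mapsto w_e$ is a bijection between $E(G)$ and $W$, since the walls $w_e$ are pairwise disjoint fibres of $\pi$.

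I do not anticipate a real obstacle here; the only point requiring a moment's articulation is that a \emph{single} passage through an edge of $w_e$ genuinely swaps half-spaces, which is immediate from the description of the components recalled above. A pleasant by-product of the argument is that the right-hand count is visibly independent of the particular admissible path $p$ chosen, which is precisely what makes the word ``any'' meaningful in the statement.
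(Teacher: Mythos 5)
Your argument is correct and is essentially the paper's own proof: both rest on the observation that a path crosses a wall $w_e$ an odd number of times exactly when $x$ and $y$ lie in different half-spaces of $w_e$, combined with Lemma \ref{lem:adm-bij} and the fact that the walls are precisely the fibres $\pi^{-1}(e)$. No gaps.
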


\begin{proof}
Pick any wall $w$ in $\widetilde G$. Label the half-spaces corresponding to $w$ by $H_-^w$ and $H_+^w$. Now if a path in $\widetilde G$
uses one of the edges of $w$, it is clear that it passes from $H_-^w$ to $H_+^w$ or vice versa.
Thus, if a wall $w$ separates $x$ and $y$, then they do not belong to the same half-space of $w$, and so
every path between them has to ``cross'' $w$ an odd number of times. Likewise, if a wall $w$ does not separate $x$ and $y$, both
vertices belong to the same half-space of $w$, and any path between them has to ``cross'' $w$
an even number of times. By ``crossing'' $w$, we mean using some edge in $w$. To summarize, $d_W(x,y)$ is
equal to the number of walls that are crossed odd number of times by any path from $x$ to $y$ in $\widetilde G$.

The proof is now finished by referring to Lemma \ref{lem:adm-bij} and remembering
that walls in $\widetilde G$ are projected via $\pi$ exactly to edges in $G$.
\end{proof}

We now use the characterizations of $d_W$ and $d_{\widetilde G}$ to show that the two metrics agree on the scale of $\girth(G)$.

\begin{proposition}\label{prop:same-balls}
If $(\widetilde G,G)$ is a $\Z/2$-pair, then for every $x,y\in V(\widetilde G)$,
$$
d_W(x,y)<\girth(G) \quad\text{if and only if}\quad d_{\widetilde G}(x,y)<\girth(G).
$$
Furthermore, if the above inequalities hold, then $d_W(x,y)=d_{\widetilde G}(x,y)$.
\end{proposition}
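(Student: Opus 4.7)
The implication $d_{\widetilde G}(x,y)<\girth(G)\Rightarrow d_W(x,y)<\girth(G)$ is immediate from Proposition \ref{prop:WlessG}. For the reverse implication together with the equality claim, my plan is to construct directly an $(x,y)$--admissible path in $G$ of length \emph{exactly} $d_W(x,y)$. By Proposition \ref{prop:graph-dist} this will yield $d_{\widetilde G}(x,y)\leqslant d_W(x,y)$, and combining with Proposition \ref{prop:WlessG} will deliver both the equality $d_W=d_{\widetilde G}$ and the missing direction of the equivalence simultaneously.

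Set $k:=d_W(x,y)<\girth(G)$ and let $F\subseteq E(G)$ be the set of edges $e$ for which the wall $w_e$ separates $x$ from $y$, so that $|F|=k$. Inspecting the proof of Proposition \ref{prop:wall-dist} shows that $F$ is intrinsic to the pair $(x,y)$: it is exactly the set of edges occurring with odd multiplicity in \emph{every} $(x,y)$--admissible path. In particular, reading off admissibility condition (ii) yields $F\cap S=\tau_x\triangle\tau_y$. I then view $F$ as the edge set of a subgraph $H\subseteq G$. Interpreting any admissible walk from $x_0$ to $y_0$ as a $\Z/2$--chain and passing to parities shows that the boundary of the indicator of $F$ equals $x_0+y_0$, so $x_0$ and $y_0$ are the only vertices of odd degree in $H$.

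The key graph-theoretic step, and where I expect the main work to lie, is to identify $H$ as a simple path from $x_0$ to $y_0$. Since $|E(H)|=k<\girth(G)$, the subgraph $H$ contains no simple loop, hence in particular no self-loops and no multi-edges; so $H$ is a simple forest. A parity count in any tree forces each component not containing $\{x_0,y_0\}$ to have only even-degree vertices, hence to be a single isolated vertex, and the component of $x_0$ to also contain $y_0$. In the resulting tree $T^{*}$, a vertex of degree $d$ separates $T^{*}$ into $d$ subtrees and so produces at least $d$ leaves of $T^{*}$; since leaves have degree $1$ (odd) and only $x_0,y_0$ are odd-degree in $H$, every vertex of $T^{*}$ has degree at most $2$. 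It follows that $T^{*}$ is a simple path of length $k$ from $x_0$ to $y_0$.

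Reading $T^{*}$ as a walk $p$ in $G$ traverses each edge of $F$ exactly once and no other edge, so $p$ has length $k$, goes from $x_0$ to $y_0$, and satisfies condition (ii) because $F\cap S=\tau_x\triangle\tau_y$. Hence $p$ is $(x,y)$--admissible, completing the construction. The degenerate case $x_0=y_0$ is absorbed: the boundary computation then returns $0$, every vertex of $H$ has even degree, and the forest argument forces $k=0$, in which case the empty walk at $x_0$ is itself the desired admissible path.
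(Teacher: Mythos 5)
Your proof is correct, but it takes a genuinely different route from the paper's. The paper fixes a \emph{shortest} $(x,y)$--admissible path $q$ and proves a separate combinatorial statement (Lemma \ref{lem:simple}, via a four-case analysis of how $q$ can enter and leave a loop) showing that every edge on any loop contained in a shortest admissible path is traversed exactly once; combined with $d_W(x,y)<\girth(G)$ and Proposition \ref{prop:wall-dist} this rules out loops, so $q$ is simple and $\ell(q)=d_W(x,y)$. You instead build an admissible path of length exactly $d_W(x,y)$ from scratch: you identify the set $F$ of edges whose walls separate $x$ from $y$ with the set of odd-multiplicity edges of \emph{any} admissible path (intrinsic by the proof of Proposition \ref{prop:wall-dist}), observe via a $\Z/2$-boundary computation that $F$ spans a subgraph whose only odd-degree vertices are $x_0$ and $y_0$, and use $|F|<\girth(G)$ to force this subgraph to be a forest, hence --- by the handshake and leaf-counting arguments --- a simple path from $x_0$ to $y_0$, which is admissible precisely because $F\cap S=\tau_x\triangle\tau_y$. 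This entirely bypasses Lemma \ref{lem:simple} and its case analysis, replacing it with a homological parity argument plus elementary tree combinatorics; it is arguably more conceptual and yields the extra information that, below the girth, the separating walls themselves trace out a geodesic from $x$ to $y$. What the paper's route buys instead is a statement about arbitrary shortest admissible paths (Lemma \ref{lem:simple} needs no girth hypothesis) that stands on its own. Your treatment of the degenerate case $x_0=y_0$ and of the multigraph issues (self-loops and parallel edges in $F$, which would break the degree-parity reading of the boundary) is also correct, since both are excluded by the bound $|F|<\girth(G)$.
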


\begin{proof}
The implication ``$\Longleftarrow$'' is trivial by Proposition \ref{prop:WlessG}. It remains to prove the
implication ``$\Longrightarrow$'' and the ``Furthermore'' part. We rely on the following lemma.
\begin{lemma}\label{lem:simple}
Given $x,y\in V(\widetilde G)$, every shortest $(x,y)$--admissible path in $G$ either does not contain a loop, or else every edge on
any loop it contains is traversed exactly once.
\end{lemma}

\begin{figure}[h]
\begin{center}
\ifpdf
\includegraphics{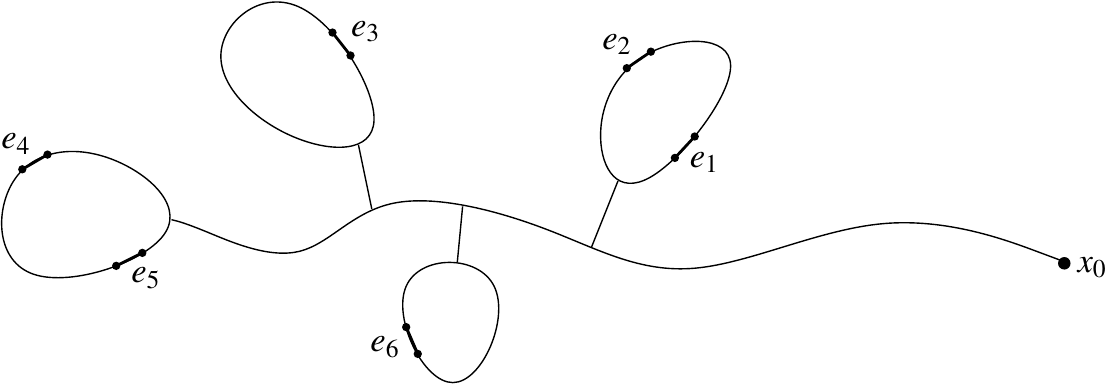}
\else\fi
\end{center}
\caption{A typical shortest $(x,y)$--admissible path; $\pi(x)=x_0=\pi(y).$}
\label{fig:spoon}
\end{figure}

Taking the lemma for granted, we finish the proof of the Proposition. Assume that $d_W(x,y)<\girth(G)$. Then
any shortest $(x,y)$--admissible path $q$ in $G$ cannot contain a loop --- otherwise the edges on it are traversed
exactly once by the above Lemma, and thus $d_W(x,y)$ is at least the length of that loop by Proposition \ref{prop:wall-dist}.
But the length of a loop in $G$ is at least $\girth(G)$, which is a contradiction. Thus we've shown that $q$ does not contain a loop.
It doesn't contain backtracks, either, see Remark \ref{rem:no-backtracks}. Thus, each edge of $q$ is traversed exactly once,
so $d_W(x,y)=\ell(q)$ by Proposition \ref{prop:wall-dist}. Now the lift $\widetilde q$ of $q$ to $\widetilde G$ is a path
between $x$ and $y$ satisfying $\ell(\widetilde q)=\ell(q)$, whence
$$
d_{\widetilde G}(x,y)\leqslant \ell(\widetilde q)=\ell(q)=d_W(x,y)<\girth(G).
$$
Note that the reverse inequality between $d_{\widetilde G}$ and $d_W$ is asserted by Proposition \ref{prop:WlessG}, so we in
fact have $d_W(x,y)=d_{\widetilde G}(x,y)$. This proves also the ``Furthermore'' part.
\end{proof}

\begin{figure}[h]
\begin{center}
\ifpdf
\includegraphics{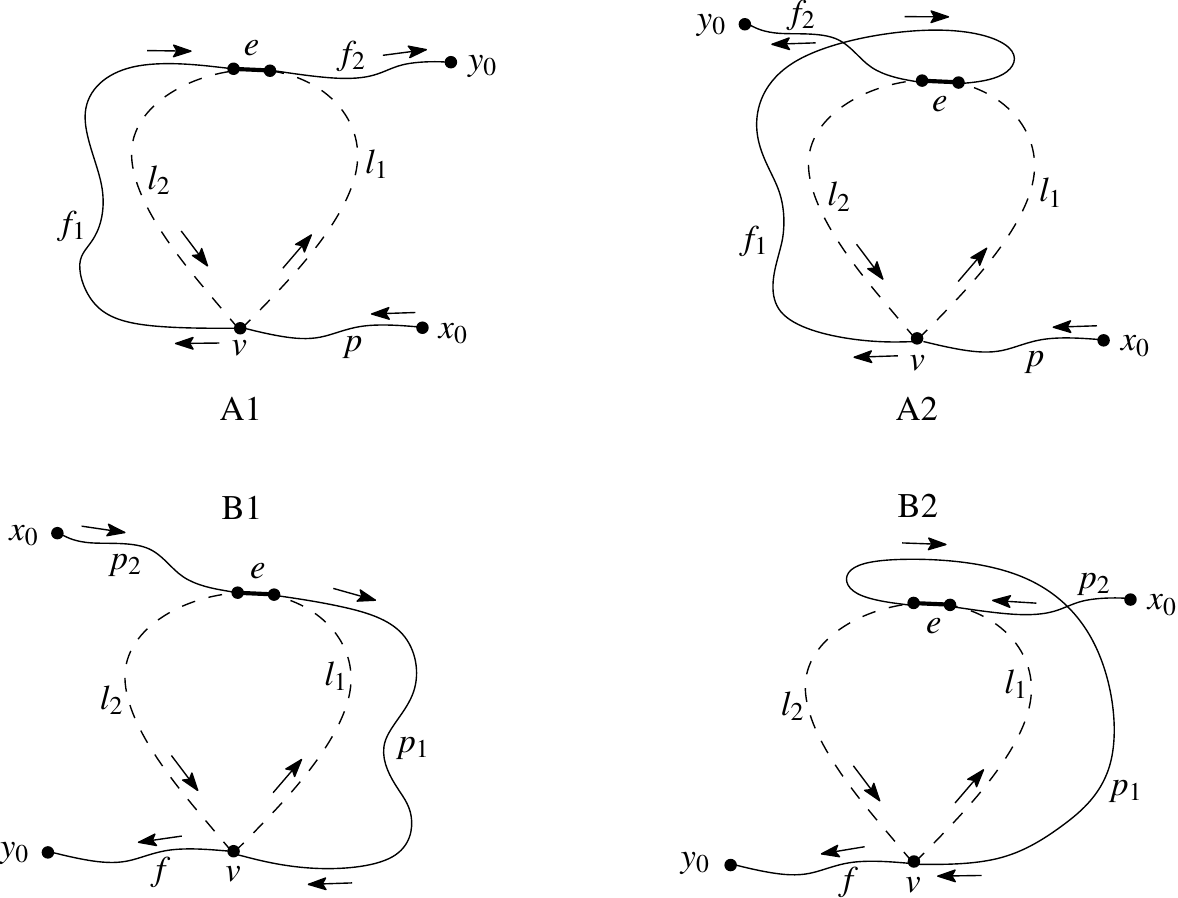}
\else\fi
\end{center}
\caption{Shortening admissible paths}
\label{fig:loops}
\end{figure}

\begin{proof}[Proof of Lemma \ref{lem:simple}]
We argue by contradiction. Assume that for some $x,y\in V(\widetilde G)$ there is a shortest $(x,y)$--admissible path $q$ in $G$, which
contains a loop on which there is an edge which is traversed more than once. Let $l$ be the shortest such loop, and denote by $v$ the vertex
through which the loop is entered and left. See Figure \ref{fig:loops}, the dashed loop. There are two possibilities: either there is an edge
on it traversed \emph{before} $q$ enters into $l$ (situations B1 and B2 in the figure), or \emph{after} it has left $l$ (situations A1 and A2).
In each of the possibilities, select the first edge $e$ in the respective direction which happens to be on $l$ (if there are such in both directions,
choose the closer one). Now in each case the edge $e$ is traversed either in the same direction as $l$ (situations A2 and B2) or in the opposite
direction (A1 and B1). Let us denote the various parts of the path $q$ as on the figures; we shall check each of the four possibilities and show
that we can find a shorter $(x,y)$--admissible path, which will be a contradiction. Let us remark that the parts of $q$
outside $l$ can cross $l$, or each other quite arbitrarily, it does not affect the argument.

In each of the cases, we show that there is a way to traverse all of $q$ (some parts maybe in the opposite direction) except that we
pass the edge $e$ exactly two times less. Also, we preserve the beginning and end vertices $x_0$ and $x_1$. It is clear from the
definition of admissibility that any new path like this will also be $(x,y)$--admissible. Let us now finally describe the traversing;
for brevity, we write ``opp $\dots$'' for ``$\dots$ in the opposite direction''.
\begin{itemize}
\item[A1:] We pass $p$, then opp $l_2$, opp $f_1$, $l_1$ and $f_2$.
\item[A2:] We pass $p$, $l_1$, opp $f_1$, opp $l_2$ and finally $f_2$.
\item[B1:] We pass $p_2$, $l_2$, $l_1$, $p_1$ and $f$.
\item[B2:] We pass $p_2$, then opp $l_1$, opp $p_1$, $l_2$ and finally $f$.
\end{itemize}
This finishes the proof.
\end{proof}

\section{Results}\label{sec:results}

In this section we prove three Propositions which yield the proof of Theorem~\ref{thm:main}.

Recall from the introduction that $\Gamma_0=\F_2$ is the free group of rank 2,  $\Gamma_n=\Gamma_{n-1}^{(2)}$ are defined
inductively for $n\geqslant 1$,
and $X_n$ denotes the Cayley graph of $\Gamma_0/\Gamma_n$ with respect to the image of the canonical generators of $\F_2$.

The {\it box space\/} of\, $\F_2$  associated with
the family $(\Gamma_n)_{n\geqslant 1}$ is, by definition,  the {\it coarse union\/} $(X,d_X)=\sqcup (X_n, d_{X_n})$ of $X_n$'s. That is,
$X=\sqcup X_n$ is the disjoint union with each $X_n$
endowed with its natural graph metric $d_{X_n}$, and  $d(X_n,X_m)\to \infty$ as $n+m\to\infty$.
For the sake of concreteness, let us declare that $d_X(X_n,X_m)=\diam(X_n)+\diam(X_m)+n+m$ whenever $n\not=m$.

\begin{lemma}\label{lem:trivial-intersection}
For the sequence $(\Gamma_n)_{n\geqslant1}$ described above, we have $\bigcap_{n\geqslant1}\Gamma_n=\{1\}$.
\end{lemma}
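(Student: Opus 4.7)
The plan is to deduce $\bigcap_{n\geqslant 1}\Gamma_n=\{1\}$ from two ingredients: (i) the cofinality of the filtration $(\Gamma_n)$ among finite $2$-group quotients of $\F_2$, and (ii) the classical residual $2$-finiteness of $\F_2$.

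For (i), I would first verify by induction that $\F_2/\Gamma_n$ is a finite $2$-group: $\F_2/\Gamma_1\cong(\Z/2)^2$ is immediate, and in the inductive step $\Gamma_n$ has finite index in $\F_2$ and is therefore free of some finite rank $r$ by Nielsen-Schreier, so $\Gamma_n/\Gamma_{n+1}=\Gamma_n/\Gamma_n^{(2)}\cong(\Z/2)^r$ is finite; an extension of one finite $2$-group by another is a finite $2$-group.

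The key algebraic observation is that for any finite $2$-group $P$, the squares subgroup $P^{(2)}$ coincides with the Frattini subgroup $\Phi(P)$: $P/P^{(2)}$ has exponent $2$ and so is elementary abelian, hence factors through $P/\Phi(P)$, giving $\Phi(P)\subseteq P^{(2)}$; conversely, $P/\Phi(P)$ is itself elementary abelian (maximal subgroups of a finite $2$-group have index $2$), so every square lies in $\Phi(P)$. Since $\Phi(Q)\subsetneq Q$ for any nontrivial finite $2$-group $Q$, the iterated Frattini series $P_0=P$, $P_{n+1}=\Phi(P_n)=P_n^{(2)}$ reaches $\{1\}$ in finitely many steps. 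For any surjection $\pi\colon\F_2\twoheadrightarrow P$ onto a finite $2$-group, an easy induction (using $\pi(\langle X\rangle)=\langle\pi(X)\rangle$) yields $\pi(\Gamma_n)=P_n$, so $\Gamma_n\subseteq\ker\pi$ for $n$ large enough; this establishes (i).

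For (ii), given $1\neq g\in\F_2$, I would use the ping-pong embedding $\varphi\colon\F_2\hookrightarrow\Gamma(2)\subset SL_2(\Z)$ sending the generators to the unipotents $\left(\begin{smallmatrix}1&2\\0&1\end{smallmatrix}\right)$ and $\left(\begin{smallmatrix}1&0\\2&1\end{smallmatrix}\right)$, composed with reduction modulo $2^n$. The image of $\Gamma(2)$ in $SL_2(\Z/2^n\Z)$ lies in the kernel of $SL_2(\Z/2^n\Z)\to SL_2(\Z/2\Z)$, which has $2$-power order; writing $\varphi(g)=I+M$ with $M$ a nonzero integer matrix and choosing $n$ with $M\not\equiv 0\pmod{2^n}$ then produces a finite $2$-group quotient of $\F_2$ in which $g$ survives. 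Combining (i) and (ii) yields some $n$ with $g\notin\Gamma_n$, proving the lemma. The main obstacle is (ii)---once the residual $2$-finiteness of $\F_2$ is in hand, the reduction to the specific filtration $(\Gamma_n)$ is essentially Frattini-subgroup bookkeeping.
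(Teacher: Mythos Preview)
Your proof is correct. The paper, by contrast, dispatches the lemma in a single sentence: it notes that the $\Gamma_n$ form a strictly decreasing chain of characteristic (indeed verbal) subgroups of $\F_2$ and invokes Levi's theorem, citing \cite[Ch.~I, Prop.~3.3]{ls}, to conclude that the intersection is trivial.

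Your argument is essentially a self-contained proof of the special case of that classical fact needed here. Step~(ii) establishes residual $2$-finiteness of $\F_2$ concretely via the Sanov embedding into $SL_2(\Z)$ and reduction modulo~$2^n$; step~(i) then carries out the Frattini bookkeeping showing that every finite $2$-group quotient of $\F_2$ factors through some $\F_2/\Gamma_n$, so that the $(\Gamma_n)$ are cofinal among kernels of such quotients. Both steps are cleanly executed (the identification $P^{(2)}=\Phi(P)$ for finite $2$-groups and the inductive claim $\pi(\Gamma_n)=P_n$ are correct as stated). The payoff of your route is that it is independent of the literature and makes transparent exactly which residual property drives the result; the cost is length. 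The paper's one-line citation is quicker but treats the conclusion as a black box.
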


\begin{proof}
The  iterated squares are proper characteristic subgroups of the free group, hence,
by Levi's theorem~\cite[Ch.I, Prop.~3.3]{ls},  they have trivial intersection.
\end{proof}

\begin{proposition}
The metric space $(X,d_X)$ does not have property A.
\end{proposition}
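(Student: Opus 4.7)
The plan is to argue by contradiction: assuming that $X$ has property~A, I would extract from its Property~A data a Reiter net of compactly supported probability measures on $\F_2$, thereby forcing $\F_2$ to be amenable and contradicting the well-known non-amenability of the free group of rank two.

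To this end, fix $\varepsilon>0$ and $R>0$, and let $\{A_x\}_{x\in X}$ and $S>0$ be the resulting Property~A data. The first step is to normalise, passing to the probability measures $f_x=\mathbf{1}_{A_x}/|A_x|\in\ell^1(X\times\mathbb{N})$. The Property~A hypothesis $|A_x\triangle A_y|/|A_x\cap A_y|<\varepsilon$ combined with the elementary observation $\max(|A_x|,|A_y|)\leqslant(1+\varepsilon)|A_x\cap A_y|$ then yields the norm bound $\|f_x-f_y\|_1<2\varepsilon$ whenever $d(x,y)\leqslant R$.

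The second, and key, step exploits the Cayley-graph structure of the $X_n$. By Lemma~\ref{lem:trivial-intersection}, for all sufficiently large $n$ the quotient $\F_2\twoheadrightarrow X_n=\F_2/\Gamma_n$ restricts to an isometric bijection of $B_{\F_2}(1,S+R)$ onto its image; fix such an $n$. Using the transitive left-translation action of the finite group $\F_2/\Gamma_n$ on $X_n$, form the average
\[ F=\frac{1}{|X_n|}\sum_{x\in X_n}\lambda(x^{-1})f_x\in\ell^1(X_n\times\mathbb{N}), \]
which is a probability measure supported in $B_{X_n}(1,S)\times\mathbb{N}$. A short re-indexing of the defining sum rewrites $\|\lambda(g)F-F\|_1$ as an average of the quantities $\|f_{yg}-f_y\|_1$, which by the previous step gives $\|\lambda(g)F-F\|_1<2\varepsilon$ for every $g\in\F_2/\Gamma_n$ with $|g|_{X_n}\leqslant R$.

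Finally, lift $F$ through the local isometry to a compactly supported $\tilde F\in\ell^1(\F_2\times\mathbb{N})$ in $B_{\F_2}(1,S)\times\mathbb{N}$ and marginalise over $\mathbb{N}$ to produce a probability measure $\mu$ on $\F_2$ supported in $B_{\F_2}(1,S)$. Both $\tilde F$ and any translate $g\cdot\tilde F$ with $|g|_{\F_2}\leqslant R$ remain in the isometric-lift region $B_{\F_2}(1,S+R)$, so $\|g\cdot\tilde F-\tilde F\|_{\ell^1(\F_2\times\mathbb{N})}=\|\lambda(g\Gamma_n)F-F\|_{\ell^1(X_n\times\mathbb{N})}<2\varepsilon$, and this bound passes to $\mu$; letting $\varepsilon\to 0$ and $R\to\infty$ assembles a Reiter net on $\F_2$, forcing amenability of $\F_2$ --- a contradiction. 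The conceptual core, and main obstacle, is converting the \emph{static} Property~A estimate $\|f_x-f_y\|_1<2\varepsilon$ (which compares two a priori unrelated Property~A sets at nearby points) into the \emph{dynamic} Reiter estimate $\|g\cdot\mu-\mu\|_1<2\varepsilon$ (one measure against its own translate). This conversion is exactly what the averaging $F=|X_n|^{-1}\sum_x\lambda(x^{-1})f_x$ accomplishes, and it depends crucially on the $X_n$ being Cayley graphs of finite groups together with the injectivity-on-balls guaranteed by $\bigcap\Gamma_n=\{1\}$.
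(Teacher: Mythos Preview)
Your argument is correct and essentially reproves, in this specific setting, one direction of the result the paper simply cites: Proposition~11.39 in Roe's \emph{Lectures on Coarse Geometry}, which asserts that a box space $\sqcup\, G/\Gamma_n$ of a residually finite group $G$ (with $\bigcap\Gamma_n=\{1\}$) has Property~A if and only if $G$ is amenable. The paper's proof is a one-liner invoking this together with Lemma~\ref{lem:trivial-intersection}; you have unpacked the ``only if'' direction from scratch via the standard normalise--average--lift argument. Your route is more self-contained and makes explicit exactly where the hypothesis $\bigcap\Gamma_n=\{1\}$ enters (namely the isometric lifting of balls from $X_n$ back to $\F_2$), whereas the paper's approach buys brevity by outsourcing this to a known reference.

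One small omission in your sketch: when you fix $n$ you also need $d_X(X_n,X_m)>S$ for all $m\neq n$, so that for $x\in X_n$ the Property~A set $A_x\subseteq B(x,S)\times\mathbb{N}$ is actually contained in $X_n\times\mathbb{N}$; otherwise the left-translation $\lambda(x^{-1})f_x$ makes no sense on the $X_m$-components. This is easily arranged alongside your isometry condition, since $d_X(X_n,X_m)\to\infty$ by construction of the box-space metric.
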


\begin{proof}
Just use the previous Lemma, together with~\cite[Proposition 11.39]{roe:LOCG}.
\end{proof}

Note that $X_0$ is just the ``figure eight'' graph (Figure \ref{fig:H-graph}). It follows from the discussion in
Subsection \ref{subsec:z2} and Proposition \ref{prop:transitivity} that each $X_n$ is the $\Z/2$-homology cover of $X_{n-1}$, $n\geqslant 1$.
Hence we may endow each $X_n$ with the metric $d_{W_n}$, the wall metric that comes from $X_n$ being the $\Z/2$-homology cover of $X_{n-1}$.
Let us denote by $(X,d_W)=\sqcup (X_n,d_{W_n})$ the coarse union of $X_n$'s with the wall metrics.
As above, $d_W(X_n,X_m)=\diam_W(X_n)+\diam_W(X_m)+n+m$ whenever $n\not=m$ with $\diam_W$ being the corresponding diameter with respect to
the wall metric.

\begin{proposition}
The metric space $(X,d_W)$ is coarsely embeddable into  a Hilbert space.
\end{proposition}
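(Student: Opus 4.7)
The plan is to use the classical ``space with walls'' embedding on each piece $(X_n,d_{W_n})$ and then assemble the resulting maps into a single map $f\colon X\to\mathcal H$ that is a coarse embedding with respect to $d_W$.

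For each $n$, fix a basepoint $x_n^0\in X_n$ and, for every wall $w\in W_n$ (produced by Lemma~\ref{lem:separating}), let $H_w^+$ denote the half-space of $X_n\setminus w$ \emph{not} containing $x_n^0$. Define
\[
f_n\colon X_n\to \ell^2(W_n),\qquad f_n(x)(w)=\mathbf{1}_{H_w^+}(x).
\]
The only walls contributing to $f_n(x)$ are those separating $x$ from $x_n^0$, which form a finite set, so $f_n(x)\in\ell^2(W_n)$. Moreover $f_n(x)-f_n(y)$ is $\pm1$ exactly on the walls separating $x$ from $y$ and zero elsewhere, whence
\[
\|f_n(x)-f_n(y)\|^2=d_{W_n}(x,y).
\]
This identity is the heart of the argument: it gives a uniform (in fact, isometric after taking square roots) embedding of each piece into its own Hilbert space.

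To handle the coarse union, let $\mathcal H=\bigoplus_{n}\bigl(\ell^2(W_n)\oplus\mathbb R\,e_n\bigr)$ with $\{e_n\}$ orthonormal, and define $f\colon X\to\mathcal H$ by $f(x)=f_n(x)\oplus\sqrt{n}\,e_n$ for $x\in X_n$. For $x,y\in X_n$ the auxiliary coordinates cancel and $\|f(x)-f(y)\|^2=d_W(x,y)$. For $x\in X_n$ and $y\in X_m$ with $n\neq m$, orthogonality gives
\[
\|f(x)-f(y)\|^2=d_{W_n}(x,x_n^0)+d_{W_m}(y,x_m^0)+n+m,
\]
a quantity sandwiched between $n+m$ and $\diam_W(X_n)+\diam_W(X_m)+n+m=d_W(X_n,X_m)\leqslant d_W(x,y)$.

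The global upper bound $\|f(x)-f(y)\|^2\leqslant d_W(x,y)$ immediately yields the implication $\|f(x_k)-f(y_k)\|\to\infty\Rightarrow d_W(x_k,y_k)\to\infty$. For the converse, if $d_W(x_k,y_k)\to\infty$ then either $x_k,y_k$ eventually lie in the same piece $X_{n_k}$, in which case $\|f(x_k)-f(y_k)\|^2=d_W(x_k,y_k)\to\infty$, or they lie in distinct pieces with indices $n_k,m_k$ satisfying $n_k+m_k\to\infty$ (otherwise $(x_k,y_k)$ would range over finitely many finite sets, contradicting $d_W(x_k,y_k)\to\infty$); then the lower bound forces $\|f(x_k)-f(y_k)\|^2\geqslant n_k+m_k\to\infty$. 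The only real subtlety is to choose the shift weights $\sqrt{n}\,e_n$ small enough that the Hilbert distance remains dominated by $d_W$ yet large enough to separate distinct pieces; the precise form of the gap $d_W(X_n,X_m)$ built into the definition of the coarse union makes this balance automatic.
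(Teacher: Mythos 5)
Your proof is correct, and it reaches the same destination by a slightly different (more explicit) route. The paper does not build the embedding by hand: it shows that $d_{W_n}$ is a symmetric, normalized negative-type kernel on each $X_n$ via the half-space computation $\sum_U \chi_U(x_i)(1-\chi_U(x_j))$ of Niblo--Reeves, observes that the declared gaps $d_W(X_n,X_m)=\diam_W(X_n)+\diam_W(X_m)+n+m$ make $d_W$ an effective negative-type kernel on all of $X$, and then invokes the equivalence between the existence of such a kernel and coarse embeddability (\cite[Theorem 11.16]{roe:LOCG}). Your argument is the constructive shadow of that one: the map $f_n(x)(w)=\mathbf{1}_{H_w^+}(x)$ with the identity $\|f_n(x)-f_n(y)\|^2=d_{W_n}(x,y)$ is exactly what the GNS/Schoenberg step behind the cited theorem produces from the Niblo--Reeves kernel, and your auxiliary coordinates $\sqrt{n}\,e_n$ play the role of the effectiveness of the glued kernel. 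What your version buys is self-containedness --- no appeal to the negative-type machinery, and an explicit embedding with the clean two-sided estimate $n+m\leqslant\|f(x)-f(y)\|^2\leqslant d_W(x,y)$ --- at the cost of a slightly longer case analysis for the coarse union. Two small presentational points: the finiteness of the support of $f_n(x)$ is automatic because each $X_n$ is a finite graph, so the remark about separating walls forming a finite set is not needed; and in the converse direction the dichotomy ``eventually in the same piece or in distinct pieces with $n_k+m_k\to\infty$'' should really be run on subsequences (the sequence may alternate between the two regimes), though the bounds you establish make that routine.
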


\begin{proof}
For all finite sequences $x_1, \dots, x_r$ of elements of $X_n$ and
$\lambda_1, \dots, \lambda_r$ of real numbers such that
$\sum_{i=1}^{r}\lambda_i= 0,$ we have
$$\sum_{i,j}^{r}\lambda_i\lambda_j d_{W_n}(x_i,x_j) = \sum_{i,j}^{r}\lambda_i\lambda_j\sum_U \chi_U(x_i)(1-\chi_U(x_j))\leqslant 0,$$ where
$\chi_U$ is the characteristic function of the half space $U,$ and
$U$ ranges over all the half spaces in $X_n,$ see the proof of
\cite[Technical lemma]{NR}.

This and obvious equalities $d_{W_n}(x,y) = d_{W_n}(y,x),$
$d_{W_n}(x,x) = 0$ for all $x,y\in X_n$ mean that the wall metric
$d_{W_n}$ is a symmetric, normalized negative type kernel on each
$X_n,$ see ~\cite[Chapter 11]{roe:LOCG} for the terminology.

This fact, together with the definition of $d_W(X_n,X_m)$ above for
$n\not=m$, implies that, $d_W$ is an effective, symmetric,
normalized negative type kernel on $X$. The existence of such a
kernel is equivalent to the coarse embeddability by~\cite[Theorem
11.16]{roe:LOCG}.
\end{proof}

In fact, Hilbert space embeds coarsely into any  $\ell_p$ for $1\leqslant p\leqslant \infty$~\cite[Corollary 3.18]{willett}.
So, does the metric space $(X, d_W)$.

\begin{lemma}\label{lem:girth}
Let $\Lambda=\F(S)$ be the free group on a finite set of generators $S$. Let $(\Lambda_n)_{n\geqslant1}$ be a sequence
of finite index normal subgroups of $\Lambda$ satisfying $\Lambda_{n+1}\subset \Lambda_n$ and $\bigcap_{n\geqslant1}\Lambda_n=\{1\}$.
If we denote by $Y_n$ the Cayley graph of $\Lambda/\Lambda_n$ with respect to the image of the generating set $S$, then
$\girth(Y_n)\to\infty$ as $n\to\infty$.
\end{lemma}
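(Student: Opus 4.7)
The plan is to reduce the girth of $Y_n$ to a statement about the minimum reduced word length of a nontrivial element of $\Lambda_n$ in $\F(S)$, and then to conclude by exploiting the finiteness of word-length balls in $\F(S)$ together with the hypothesis $\bigcap_n \Lambda_n = \{1\}$.

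The first step would be to establish the identity
$$
\girth(Y_n) \;=\; \min\bigl\{\,|w|_S \;\bigm|\; w \in \Lambda_n \setminus \{1\}\,\bigr\},
$$
where $|w|_S$ denotes the reduced word length of $w \in \F(S)$. A loop in $Y_n$ based at the identity coset corresponds to a sequence $s_1, \dots, s_\ell$ with $s_i \in S \cup S^{-1}$ and $w = s_1 \cdots s_\ell \in \Lambda_n$, and simplicity of the loop is equivalent to $w$ being cyclically reduced together with the partial products $s_1 \cdots s_i$ being distinct modulo $\Lambda_n$. Conversely, if $w \in \Lambda_n\setminus\{1\}$ has minimal length $\ell$, then $w$ is cyclically reduced---otherwise $w = sw's^{-1}$ and normality of $\Lambda_n$ would yield the strictly shorter element $w' \in \Lambda_n$---and the partial products of $w$ are pairwise distinct modulo $\Lambda_n$, since any coincidence would produce a nontrivial element of $\Lambda_n$ of length strictly less than $\ell$.

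For the second step, fix $R > 0$. The set $B_R = \{w \in \F(S) \mid 0 < |w|_S \leqslant R\}$ is finite because $S$ is finite. For each $w \in B_R$, the hypothesis $\bigcap_n \Lambda_n = \{1\}$ supplies an index $n(w)$ with $w \notin \Lambda_{n(w)}$, and the nesting $\Lambda_{n+1} \subset \Lambda_n$ then gives $w \notin \Lambda_n$ for every $n \geqslant n(w)$. Setting $N(R) = \max_{w \in B_R} n(w)$, one obtains $B_R \cap \Lambda_n = \emptyset$ whenever $n \geqslant N(R)$, and the first step forces $\girth(Y_n) > R$. Since $R$ was arbitrary, $\girth(Y_n) \to \infty$.

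The main technical obstacle sits in the first step: one must ensure that a shortest nontrivial element of $\Lambda_n$ truly produces a \emph{simple} loop in $Y_n$, ruling out both immediate backtracks (handled by cyclic reducedness, which in turn uses normality of $\Lambda_n$) and premature returns to previously visited cosets (handled by the minimal-length hypothesis). Once these two points are verified, the rest is a straightforward pigeonhole on the finite ball $B_R$.
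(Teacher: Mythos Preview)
Your proof is correct and follows essentially the same strategy as the paper: both arguments use the finiteness of word-length balls in $\F(S)$ together with the nesting and trivial-intersection hypotheses to show that $\Lambda_n$ eventually avoids any fixed ball, and then deduce that the girth exceeds any given bound. The only cosmetic difference is in the packaging of the first step: you establish the explicit formula $\girth(Y_n) = \min\{|w|_S : w \in \Lambda_n \setminus \{1\}\}$ (using normality to obtain cyclic reducedness), whereas the paper argues that once $B_\Lambda(1,2r)\cap\Lambda_n=\{1\}$, the quotient map restricts to a bijection $B_\Lambda(1,r)\to B_{Y_n}(1,r)$, so balls of radius $r$ in $Y_n$ are trees and hence $\girth(Y_n)>r$.
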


\begin{proof}
The generating set $S$ determines a word metric on $\Lambda=\F(S)$,
which is also the graph metric on the Cayley graph of $\F(S)$ with respect to the generating set $S$;
this graph is a tree. Take $r>0$ and consider the ball $B_\Lambda(1,r)$ of radius $r$ around $1$ in $\F(S)$.
Since it is finite, the conditions that $\bigcap_{n\geqslant1}\Lambda_n=\{1\}$ and $\Lambda_{n+1}\subset\Lambda_n$
imply that there is some $n_r$, such that $B_\Lambda(1,2r)\cap \Lambda_n=\{1\}$ for all $n>n_r$.
That is, under the quotient map $\Lambda\twoheadrightarrow \Lambda/\Lambda_n$, the ball $B_\Lambda(1,r)$ bijectively
corresponds to the ball $B_{Y_n}(1,r)$ in the Cayley graph $Y_n$, for $n>n_r$. Since $Y_n$ is homogeneous, any ball of radius
$r$ in $Y_n$ is actually a tree, as is $B_\Lambda(1,r)$. It follows that $\girth(Y_n)>r$ for $n>n_r$ and we are done.
\end{proof}

\begin{proposition}
The identity map $\id:(X,d_X)\to (X,d_W)$ is a coarse equivalence (that is, both $\id$ and $\id^{-1}$ are coarse embeddings).
\end{proposition}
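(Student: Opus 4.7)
The plan is to verify the two defining inequalities of a coarse equivalence: $\id\colon(X,d_X)\to(X,d_W)$ is controlled (every $d_X$-bounded set has $d_W$-bounded image), and $\id^{-1}$ is controlled. The first direction is essentially free from the work already done: Proposition~\ref{prop:WlessG} gives $d_{W_n}\leqslant d_{X_n}$ on each component, and the definitions of $d_X,d_W$ on pairs of points in different components of the coarse union are compatible (both grow with $n+m$; the wall-diameters of $X_n$ do not exceed the graph-diameters). Hence $d_W\leqslant d_X$ globally, which immediately yields that $\id$ is a coarse map.

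The content is in the other direction: given $R>0$ we must find $S>0$ such that $d_W(x,y)\leqslant R$ implies $d_X(x,y)\leqslant S$. I will split this into a ``cross-component'' case and an ``inside-component'' case. If $x\in X_n$ and $y\in X_m$ with $n\neq m$, then the prescribed formula $d_W(x,y)=\diam_W(X_n)+\diam_W(X_m)+n+m\leqslant R$ forces $n+m\leqslant R$, so only finitely many pairs $(n,m)$ contribute; for each such pair, $X_n$ and $X_m$ are finite, whence $d_X(x,y)$ is bounded by a constant depending only on $R$.

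The interesting case is $x,y\in X_n$. Here the essential tool is Proposition~\ref{prop:same-balls}: since by Subsection~\ref{subsec:z2} and Proposition~\ref{prop:transitivity} the pair $(X_n,X_{n-1})$ is a $\Z/2$-pair, one has $d_{W_n}(x,y)=d_{X_n}(x,y)$ whenever either side is strictly smaller than $\girth(X_{n-1})$. Combining Lemma~\ref{lem:trivial-intersection} with Lemma~\ref{lem:girth} applied to the sequence $(\Gamma_n)_{n\geqslant 1}$ in $\F_2$ gives $\girth(X_{n-1})\to\infty$. Thus there is $N=N(R)$ with $\girth(X_{n-1})>R$ for all $n\geqslant N$; for such $n$ the hypothesis $d_{W_n}(x,y)\leqslant R$ yields $d_{X_n}(x,y)=d_{W_n}(x,y)\leqslant R$. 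For the finitely many remaining indices $n<N$, the graph $X_n$ is finite, so $d_{X_n}(x,y)\leqslant\diam(X_n)$ is a constant depending only on $R$.

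Taking $S$ to be the maximum of $R$ and the finitely many diameters contributed by the two exceptional regimes finishes the argument. I do not anticipate a real obstacle here; the proof is mostly bookkeeping once one recognises that Proposition~\ref{prop:same-balls} provides agreement of the two metrics at scale $\girth(X_{n-1})$, and that the trivial-intersection property of $(\Gamma_n)_{n\geqslant 1}$ forces these girths to diverge.
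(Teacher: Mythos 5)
Your proof is correct and follows essentially the same route as the paper's: the forward direction from Proposition~\ref{prop:WlessG}, and the reverse direction by using Lemma~\ref{lem:girth} to push the girth above $R$ and then invoking Proposition~\ref{prop:same-balls} to identify the two metrics at scale $R$, with the finitely many small components and cross-component pairs absorbed into the constant $S$. Your explicit separation of the cross-component case (and your use of $\girth(X_{n-1})$, the girth of the base graph of the $\Z/2$-pair, which is what Proposition~\ref{prop:same-balls} actually requires) is if anything slightly more careful than the paper's wording.
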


\begin{proof}
Since $\id:(X,d_X)\to (X,d_W)$ is $1$-Lipschitz (by Proposition \ref{prop:WlessG}),
it is sufficient to show the following statement: For any $R\geqslant0$ there exists $S\geqslant0$, such that $d_W(x,y)\leqslant R$
implies $d_X(x,y)\leqslant S$, for all $x,y\in X$.

So take $R\geqslant0$ and let $N\geqslant0$ be such that $\girth(X_n)>R$ for $n\geqslant N$;
this is possible by Lemma \ref{lem:girth}. By possibly enlarging $N$, we may further assume that $d_W(X_n,X_m)>R$ for any
$m,n\geqslant N$ with $m\not=n$, or for any $m<N$ and $n\geqslant N$.
Now define $S=\max\{R\}\cup\{d_X(x,y)\mid x,y\in \cup_{n<N}X_n\}$ (note that the latter is a finite set).

Let us check the required implication. Take $x,y\in X$ with $d_W(x,y)\leqslant R$. By our setup, there are
two possibilities: either both $x$ and $y$ belong to $\cup_{n<N}X_n$ and then $d_X(x,y)\leqslant S$ by the choice of $S$; or
$x,y\in X_n$ for some $n\geqslant N$. But for such $n$ the restriction of the identity map $\id:(X_n,d_{X_n})\to (X_n,d_{W_n})$
to any ball $B_X(z,R)$, $z\in X_n$, of radius $R<\girth(X_n)$, is an isometry onto the ball $B_W(z,R)$ by
Proposition \ref{prop:same-balls}. Thus necessarily $d_X(x,y)=d_W(x,y)\leqslant R\leqslant S$ and we are done.
\end{proof}

Observe that $(X,d_X)$ consists of Cayley graphs of groups with a fixed number of generators, it follows that
it has bounded geometry. Then the previous Proposition implies that $(X,d_W)$ has also bounded geometry.
This finishes the proof of our main result, Theorem~\ref{thm:main}.

\section{C*-algebraic context and Questions}\label{sec:apps}

In this section, we discuss our example in the C*-algebraic context. To every metric space, there are associated various
Roe C*-algebras, coarse--geometric analogues of group C*-algebras.

\begin{definition}[See \cite{roe:LOCG}]
Given a uniformly discrete metric space $X$ with bounded geometry, its \emph{algebraic uniform Roe algebra}
(sometimes also called \emph{Gromov's translation algebra}) $C^*_{\rm alg}X$ is the algebra of $X$-by-$X$
complex matrices $T=(t_{xy})_{x,y\in X}$, which have uniformly bounded entries (i.e.\ $\sup_{x,y}|t_{xy}|<\infty$) and finite
propagation (i.e.\ $\sup\{d(x,y)\mid x,y\in X\text{ and }t_{xy}\not=0\}<\infty$). This algebra is naturally represented
(by matrix multiplication) on $\ell^2X$; its closure in $\mathcal{B}(\ell^2X)$ is called the \emph{uniform Roe algebra} of $X$,
denoted $C^*_{\rm u}X$. The \emph{maximal uniform Roe algebra} of $X$, denoted $C^*_{\rm u,max}X$, is the closure of $C^*_{\rm alg}X$ in
the biggest C*-norm that it admits.
\end{definition}

\subsection{Relation between the uniform Roe algebra and its maximal version}

There is always a natural surjective map $\lambda:C^*_{\rm u,max}X\to C^*_{\rm u}X$. If $X$ has Property A, this map
is an isomorphism \cite{BO,spakula-willett}; the converse is an open question, a particular case of an open question about groupoid C*-algebras.
In the paper \cite{spakula-willett} the authors prove that if $X$ coarsely embeds into a Hilbert space, $\lambda$ induces
an isomorphism $\lambda_*:K_*(C^*_{\rm u,max}X)\to K_*(C^*_{\rm u}X)$. Thus our example from Theorem \ref{thm:box-space} is
covered by this result. This leads to an interesting question: Is the map $\lambda$ an isomorphism in this case?

\subsection{Nuclearity and exactness}

For a discrete group $\Gamma$ endowed with a proper metric, a result of Ozawa \cite{Ozawa} (see also Guentner--Kaminker \cite{GK}) says that
the following are equivalent:
\begin{itemize}
\item $\Gamma$ has Property A;
\item $\Gamma$ is C*-exact;
\item $C^*_{\rm u}|\Gamma|$ is nuclear.
\end{itemize}
For general bounded geometry metric spaces, the first and the third statements are also equivalent \cite{BO}.

It follows from this result that for $C^*_{\rm u}|\Gamma|$, exactness implies nuclearity (the converse is a general C*-algebraic fact).
It is an open question whether this is true for general metric spaces with bounded geometry. Progress in the positive direction has
been made by Brodzki, Niblo and Wright in \cite{BNW}. If one were to look for a counterexample, one should take a space without Property A,
so that its uniform Roe algebra is not nuclear, and prove that it is exact. Naturally, one can consider expanders, but it is known for a
large class that their uniform Roe algebras are not exact \cite{HG}.

From this perspective, one can ask whether the uniform Roe algebra of our example from Theorem \ref{thm:box-space} is exact or not.
The criterion from \cite{BNW} does not readily apply. However, it was pointed to us by Willett that an
elaboration of the argument in \cite[pages 96--98]{BO} shows that in fact uniform Roe algebras of box spaces in general are either nuclear or
not exact.

\bibliography{CE-not-A}
\bibliographystyle{plain}

\end{document}